\documentclass[11pt,english,a4paper]{amsart}
\pdfoutput=1

\usepackage[utf8]{inputenc}
\usepackage[T1]{fontenc}
\usepackage{lmodern}
\usepackage{babel}
\usepackage{amsmath,amssymb,amsthm}
\usepackage[marginratio={1:1,1:1},totalwidth=404pt,totalheight=606pt]{geometry}
\usepackage{microtype}
\usepackage{hyperref}
\usepackage{url}

\theoremstyle{plain}
\newtheorem{theorem}{Theorem}[section]
\newtheorem{lemma}[theorem]{Lemma}
\newtheorem{proposition}[theorem]{Proposition}
\newtheorem{corollary}[theorem]{Corollary}
\newtheorem{fact}[theorem]{Fact}

\theoremstyle{definition}
\newtheorem{definition}[theorem]{Definition}

\newtheorem{remark}[theorem]{Remark}
\newtheorem{example}[theorem]{Example}

\newcommand{\C}{\mathbb{C}}
\newcommand{\N}{\mathbb{N}}
\newcommand{\Z}{\mathbb{Z}}

\newcommand{\T}{\mathbb{T}}
\newcommand{\U}{\mathcal U}
\newcommand{\Cr}{C^*_r}
\newcommand{\Zst}{\mathcal{Z}}

\title[Selfless twisted group $C^*$-algebras]{Selflessness for twisted group $C^*$-algebras of amenable groups and their inclusions}

\author[Omland]{Tron Omland}
\address{Norwegian National Security Authority (NSM) \and Department of Mathematics, University of Oslo, Norway}
\email{tron.omland@gmail.com}

\subjclass[2020]{46L05 (Primary); 22D25, 46L35, 46L55 (Secondary)}

\date{19 August 2026}

\begin{document}

	\dedicatory{Dedicated to Erik B\'edos on the occasion of his 70th birthday.}

	\begin{abstract}
			For a discrete amenable group $G$ with a two-cocycle $\sigma$, we record a few results on when the twisted group $C^*$-algebra $\Cr(G,\sigma)$ is selfless, in the sense of Robert. In particular, for an infinite finitely generated virtually nilpotent $G$, this holds exactly when $(G,\sigma)$ satisfies Kleppner's condition. For countably infinite FC-hypercentral groups, selflessness is equivalent to Kleppner's condition together with either $\Zst$-stability or, equivalently, finite nuclear dimension. We also settle one case outside the finitely generated setting, namely the free abelian group of countably infinite rank with a particular root-of-unity-valued cocycle, where the associated algebra is selfless and has nuclear dimension one. Further, using the relative Kleppner condition, we obtain corresponding selflessness results for inclusions $\Cr(H,\sigma')\subseteq\Cr(G,\sigma)$ when $H$ is a normal subgroup of $G$. For amenable $G$, such an inclusion is selfless precisely when $\Cr(H,\sigma')$ is selfless and $(H\leq G,\sigma)$ satisfies the relative Kleppner condition. As a consequence, for an infinite finitely generated virtually nilpotent $G$, selflessness of the inclusion $\Cr(H,\sigma')\subseteq\Cr(G,\sigma)$ is equivalent to the relative Kleppner condition. As applications, we obtain selfless twisted group $C^*$-algebras of the lamplighter group and of the wreath product $\Z\wr\Z$.
	\end{abstract}

	\maketitle

	\section{Introduction}

		A tracial $C^*$-probability space $(A,\rho)$ is \emph{selfless} \cite{Robert} if the first-factor embedding $(A,\rho)\hookrightarrow(A,\rho)\star(A,\rho)$ into the reduced free product is existential (see Definition~\ref{d:selfless}). Selflessness is a free-probabilistic regularity property. It forces $A$ to be simple, to have a unique tracial state, strict comparison and stable rank one. Large classes of non-amenable groups have selfless reduced (twisted) group $C^*$-algebras \cite{AGKEP,RTV,Ozawa,HKEPR,Flores}. This note concerns the amenable side, where the study of selflessness combines the Toms--Winter $\Zst$-stability theory with the classical question of when $\Cr(G,\sigma)$ is simple.

	Two remarks put the amenable case in perspective. If $G$ is a nontrivial amenable group, then $\Cr(G)=C^*(G)$ admits a character, so it is never simple and never selfless. Selflessness for amenable groups is therefore a purely twisted phenomenon. It is also genuinely $C^*$-algebraic, since if $G$ is countably infinite amenable and $(G,\sigma)$ satisfies Kleppner's condition, then $W^*(G,\sigma)$ is always the hyperfinite II$_1$ factor, which satisfies the von Neumann analogue of selflessness, see \cite[Remark~2.7]{Robert}.

		The arguments combine four inputs. First, in the nuclear setting selflessness coincides with the regularity package ``simple\,$+$\,unique tracial state\,$+$\,$\Zst$-stable'' \cite{Robert,MatuiSato,Ozawa}. For twisted group $C^*$-algebras of amenable groups, a unique tracial state implies simplicity \cite{BedosOmlandST}. For FC-hypercentral $G$, a unique tracial state and simplicity are both equivalent to Kleppner's condition \cite{BedosOmlandFC,BedosOmlandST}. Finally, twisted group $C^*$-algebras of virtually polycyclic groups have finite nuclear dimension \cite{EckhardtWu}. For simple, separable, unital, nuclear, infinite-dimensional $C^*$-algebras, finite nuclear dimension is equivalent to $\Zst$-stability. Thus, under Kleppner's condition, $\Zst$-stability is automatic in the infinite finitely generated virtually nilpotent case.

		For inclusions, we use four further inputs. The first is the theory of $\Zst$-stable inclusions \cite{Sarkowicz}. The second is equivariant $\Zst$-stability for twisted actions of countable amenable groups on simple, nuclear, $\Zst$-stable $C^*$-algebras with a unique tracial state, in the form recorded in Lemma~\ref{f:ghv}, following \cite{Szabo}. The third is the relative Kleppner condition of \cite{BedosOmlandST,BedosOmlandIrr}, which governs $C^*$-irreducibility and the relative Dixmier property. Finally, we apply the selfless inclusion criterion of \cite{HKEPR}.

		The main results are Theorems~\ref{t:vn} and~\ref{t:vn-incl}. For an infinite finitely generated virtually nilpotent group $G$, selflessness of $\Cr(G,\sigma)$ holds exactly under Kleppner's condition, and selflessness of the inclusion $\Cr(H,\sigma')\subseteq\Cr(G,\sigma)$ holds exactly under the relative Kleppner condition. The main ingredient behind the inclusion results is Proposition~\ref{t:amenable-inclusion-selfless}, which says that $\Zst$-stability of the inclusion needs no outerness or any other condition on the dynamics. The dynamics enter only through the relative Kleppner condition, which governs $C^*$-irreducibility. Outside the finitely generated setting the missing ingredient is always $\Zst$-stability; Proposition~\ref{p:apdiv} provides it from an approximate divisibility criterion, and Theorem~\ref{t:ex39} applies that criterion to an infinite-rank noncommutative torus whose cocycle takes only root-of-unity values. The inclusion results reach beyond the finitely generated virtually nilpotent class. We illustrate this with the lamplighter group and with $\Z\wr\Z$ in Examples~\ref{ex:lamp} and~\ref{ex:zwrz}. For selfless inclusions of twisted group $C^*$-algebras on the non-amenable side, see \cite{Flores}.

	\section{Preliminaries}

	Let $G$ be a discrete group with identity $e$. A \emph{two-cocycle} on $G$ is a map $\sigma\colon G\times G\to\T$ such that
	\[
	\sigma(g,h)\sigma(gh,k)=\sigma(h,k)\sigma(g,hk)
	\quad\text{and}\quad
	\sigma(g,e)=\sigma(e,g)=1
	\]
		for all $g,h,k\in G$. The \emph{$\sigma$-projective left regular representation} of $G$ on $\ell^2(G)$ is given by $\lambda_\sigma(g)\delta_h=\sigma(g,h)\delta_{gh}$ and satisfies $\lambda_\sigma(g)\lambda_\sigma(h)=\sigma(g,h)\lambda_\sigma(gh)$. We write $\Cr(G,\sigma)$ for the reduced twisted group $C^*$-algebra generated by $\{\lambda_\sigma(g)\}_{g\in G}$ and $\tau$ for its canonical faithful tracial state, determined by $\tau(\lambda_\sigma(g))=\delta_{g,e}$. The algebra $\Cr(G,\sigma)$ is unital; it is separable when $G$ is countable and nuclear if and only if $G$ is amenable; see e.g., \cite[Proposition~2.14]{OmlandPrime}. In that case it is also exact and the full and reduced twisted group $C^*$-algebras coincide. For a subgroup $K\leq G$, the canonical map $\Cr(K,\sigma|_K)\to\Cr(G,\sigma)$ is a unital trace-preserving embedding.

	\begin{definition}[Kleppner {\cite{Kleppner}}]
		An element $g\in G$ is \emph{$\sigma$-regular} if $\sigma(g,h)=\sigma(h,g)$ for all $h\in G$ with $gh=hg$. One checks that $\sigma$-regularity is a property of conjugacy classes. The pair $(G,\sigma)$ satisfies \emph{Kleppner's condition} if every nontrivial $\sigma$-regular conjugacy class is infinite (equivalently, if the twisted group von Neumann algebra $W^*(G,\sigma)$ is a factor \cite{Kleppner}). Kleppner's condition is necessary but in general not sufficient for $\Cr(G,\sigma)$ to be simple or to have a unique tracial state.
	\end{definition}

		The \emph{FC-center} $FC(G)$ of $G$ is the set of elements with finite conjugacy class. The \emph{upper FC-series} $\{F_\alpha\}$ of $G$ is defined by $F_0=\{e\}$, $F_{\alpha+1}/F_\alpha=FC(G/F_\alpha)$, and $F_\lambda=\bigcup_{\alpha<\lambda}F_\alpha$ at a limit ordinal $\lambda$. The series eventually stabilizes, and its final term is called the \emph{FC-hypercenter} of $G$. The group $G$ is \emph{FC-hypercentral} if its FC-hypercenter is all of $G$; equivalently, $G$ has no nontrivial ICC quotient; see e.g., \cite{BedosOmlandFC}.

	We collect the external results used below and use the label \emph{Fact} for statements not proved here.

	\begin{fact}[folklore, cf.\ \cite{BedosOmlandFC}]\label{f:fg}
		Every FC-hypercentral group is amenable, and every subgroup of an FC-hypercentral group is again FC-hypercentral. Every virtually nilpotent group is FC-hypercentral, and every finitely generated virtually nilpotent group is virtually polycyclic. Every FC-hypercentral group has polynomial growth. So by Gromov's theorem the three properties virtually nilpotent, FC-hypercentral and polynomial growth agree for finitely generated groups.
	\end{fact}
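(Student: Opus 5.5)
The plan is to prove the claims of Fact~\ref{f:fg} one at a time, using the upper FC-series and standard structure theory.

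\emph{Amenability and subgroups.} I will argue by transfinite induction on the upper FC-series $\{F_\alpha\}$. The starting point is that an FC-group is amenable. Its finitely generated subgroups are finite-by-abelian: the center has finite index, since it is the intersection of the finite-index centralizers of the generators, so Schur's theorem applies. A directed union of amenable groups is amenable, so FC-groups are amenable. Amenability passes to extensions, which handles successor steps $F_{\alpha+1}$ over $F_\alpha$, and it passes to directed unions, which handles limit ordinals. Hence $G=F_\gamma$ is amenable.

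For a subgroup $H\leq G$, I will use the characterization ``no nontrivial ICC quotient''. More concretely, I will show by induction that $H\cap F_\alpha$ is contained in the $\alpha$-th term of the upper FC-series of $H$. The key point is that an element whose conjugacy class in $G/F_\alpha$ is finite also has finite $H$-conjugacy class modulo $H\cap F_\alpha$, and hence modulo the larger $\alpha$-th FC-term of $H$.

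\emph{Virtually nilpotent groups.} Let $N\trianglelefteq G$ be nilpotent of finite index, and take the upper central series of $N$; these terms are characteristic, hence normal in $G$. I will show that each element of $Z(N)$ has finite $G$-conjugacy class, because its centralizer contains $N$. Inductively, the upper FC-series of $G$ eventually contains $N$. Finally, $G/N$ is finite, so it is an FC-group, and the series reaches $G$.

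A finitely generated nilpotent group is polycyclic, since finitely generated nilpotent groups satisfy the maximal condition on subgroups. A finite-index subgroup of a finitely generated group is finitely generated, so finitely generated virtually nilpotent groups are virtually polycyclic.

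\emph{Polynomial growth.} This is the main obstacle. The statement ``every FC-hypercentral group has polynomial growth'' must be read for finitely generated groups, where growth is defined. My plan is to show that a finitely generated FC-hypercentral group is virtually nilpotent; Wolf/Bass--Guivarc'h then give polynomial growth.

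For this I would use the theorem of McLain/Duguid--McLain that finitely generated FC-hypercentral groups are virtually nilpotent. A sketch goes by induction along the FC-series. Since $G$ is finitely generated, only finitely many FC-layers matter modulo a suitable argument. Each finitely generated FC-quotient is finite-by-abelian, and one assembles a finite-index nilpotent subgroup. I expect the inductive passage through limit ordinals to be the delicate part. Finite generation should force the series to reach $G$ at a finite stage after passing to a finite-index subgroup, but I would cite the literature for this rather than reprove it.

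Finally, Gromov's theorem gives that polynomial growth implies virtually nilpotent. This closes the cycle: virtually nilpotent implies FC-hypercentral, which implies polynomial growth, which implies virtually nilpotent, for finitely generated groups.
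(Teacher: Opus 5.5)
Your proposal is correct. The paper gives no argument here: it records the statement as folklore and points to Bédos--Omland. Your arguments for the elementary parts therefore supply what the paper leaves to the literature, and they are sound: the transfinite inductions for amenability and for subgroup closure, the upper-central-series argument for virtually nilpotent $\Rightarrow$ FC-hypercentral, and the maximal-condition argument for polycyclicity. Two cosmetic points: you never actually use the ICC characterization in the subgroup step, and at the end it is $G/F_c$, a quotient of $G/N$, that is finite.

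For the growth clause you, like the paper, rely on cited theorems, but the logical route differs. The paper's phrasing runs virtually nilpotent $\Rightarrow$ FC-hypercentral $\Rightarrow$ polynomial growth $\Rightarrow$ virtually nilpotent, with Gromov closing the cycle. You instead get FC-hypercentral $\Rightarrow$ virtually nilpotent directly from McLain/Duguid--McLain and then polynomial growth from Wolf/Bass--Guivarc'h. So in your chain Gromov is needed only for polynomial growth $\Rightarrow$ virtually nilpotent. Reading the clause for finitely generated groups is right. For general groups it means local polynomial growth, which follows from the finitely generated case by your subgroup-closure result.

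Your heuristic sketch of the McLain step does not yet contain the mechanism that handles limit stages, which is finite presentability. If you want the step self-contained, let $G$ be finitely generated with $F_\gamma=G$, and descend from the top with the statement $P(\alpha)$: $G/F_\alpha$ is virtually nilpotent. $P(\gamma)$ is trivial. Suppose $P(\alpha)$ holds with $\alpha>0$. Then $G/F_\alpha$ is virtually polycyclic, hence finitely presented, so $F_\alpha$ is the normal closure of finitely many elements $x_1,\dots,x_m$.
If $\alpha$ is a limit ordinal, the $x_i$ lie in some $F_{\alpha''}$ with $\alpha''<\alpha$, so $F_\alpha=F_{\alpha''}$ and $P(\alpha'')$ holds.
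If $\alpha=\alpha'+1$, the images of the $x_i$ in $G/F_{\alpha'}$ have finite conjugacy classes. Hence $F_\alpha/F_{\alpha'}$ is finitely generated and has a finite-index centralizer. Intersecting that centralizer with the preimage of a finite-index nilpotent subgroup of $G/F_\alpha$ gives a finite-index subgroup of $G/F_{\alpha'}$. It is a central extension of a nilpotent group, hence nilpotent, so $P(\alpha')$ holds.
Since there is no infinite descending chain of ordinals, $P(0)$ follows.
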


	\begin{fact}[Eckhardt--Wu {\cite{EckhardtWu}}]\label{f:ew}
		If $G$ is virtually polycyclic, then $\Cr(G,\sigma)$ has finite nuclear dimension.
	\end{fact}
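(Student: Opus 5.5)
This statement is a cited result of Eckhardt and Wu. Within the paper it is only invoked, not proved, but I would reconstruct its proof as follows.

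\emph{Reduction to the untwisted case.} Pass from the twisted algebra to an ordinary group $C^*$-algebra by a central extension. If $\sigma$ took values in a finite subgroup of $\T$, the extension $\tilde G$ of $G$ by that finite cyclic group would again be virtually polycyclic. Moreover, $\Cr(G,\sigma)$ would be a direct summand (a corner cut by a central projection) of $C^*(\tilde G)$, and finite nuclear dimension passes to such quotients. In general $\sigma$ need not have finite order. Instead, we may replace $\sigma$ by a cohomologous cocycle with values in a finitely generated subgroup of $\T$, since $G$ is finitely presented. The central extension of $G$ by a finitely generated abelian group $A$ is then still virtually polycyclic. This gives $C^*(\tilde G)$ as a $C(\widehat A)$-algebra whose fibre at the character $\chi$ is $C^*(G,\chi\circ\sigma)$. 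Then $\Cr(G,\sigma)$ is a quotient of $C^*(\tilde G)$, and it suffices to show that $C^*(\tilde G)$ has finite nuclear dimension for finitely generated virtually nilpotent, and more generally virtually polycyclic, groups.

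\emph{Untwisted polycyclic case.} For $\tilde G$ virtually nilpotent, use the theorem of Eckhardt--Gillaspy--McKenney type: every primitive quotient of $C^*(\tilde G)$ has nuclear dimension bounded in terms of the Hirsch length. Such a quotient is either a (twisted) algebra of a lower-Hirsch-length group or is $\Zst$-stable/classifiable, hence has dimension at most one. These bounds are then combined across the primitive ideal space. Here $C^*(\tilde G)$ is a continuous-trace-like bundle over a space of finite dimension (essentially $\widehat{Z}$ for a central subgroup $Z$), and the dimension estimate $\dim_{nuc}\le (\dim X+1)(\sup_x \dim_{nuc} A_x+1)-1$ applies. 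For virtually polycyclic groups, I would use the decomposition rank/nuclear dimension bounds for crossed products $C(X)\rtimes\Z$ (Hirsch--Szab\'o--Wu--Zacharias). I would induct along a subnormal series with cyclic or finite factors, writing $C^*(\tilde G)$ as an iterated twisted crossed product by $\Z$ or by finite groups. Each step by $\Z$ multiplies the bound finitely, via Rokhlin dimension estimates for actions on $C(X)$-type algebras of finite covering dimension. Each finite-group step is handled by the finite-index bound $\dim_{nuc}(B\rtimes F)\le |F|(\dim_{nuc}B+1)-1$, or by passing to a finite-index normal subgroup and using induced algebras.

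\emph{Main obstacle.} The hard step is the crossed product by $\Z$. Finite Rokhlin dimension is not automatic for actions on noncommutative (twisted) algebras. I would first try to keep everything at the level of commutative $C(\widehat N)$ for a torsion-free nilpotent normal subgroup $N$, viewing the algebra as a twisted crossed product $C^*(N)\rtimes G/N$. The task would then become controlling the action of the virtually abelian-by-finite quotient on the spectrum of $N$ using its topological dimension. I expect this induction to be where the genuine work, and the reliance on the literature, lies.
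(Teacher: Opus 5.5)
The paper gives no argument for this statement; it is imported from Eckhardt--Wu. Your first step is sound. Since $G$ is finitely presented, $\sigma$ is cohomologous to a cocycle with values in a finitely generated subgroup $A\subseteq\T$, namely the subgroup generated by the relators evaluated on lifts of the generators. The resulting central extension $\tilde G$ is again virtually polycyclic. $\Cr(G,\sigma)=C^*(G,\sigma)$ is a fibre, hence a quotient, of the $C(\widehat A)$-algebra $C^*(\tilde G)$, and nuclear dimension does not increase under quotients. Combined with the untwisted theorem from the literature, this reduction would be a complete argument.

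The gap is the untwisted case. You do not actually prove it, and the mechanisms you propose would not work.
\begin{itemize}
\item Your induction along a subnormal series needs finite nuclear dimension to survive crossed products of the noncommutative algebra $C^*(N)$ by $\Z$ and, possibly twisted, by finite groups.
\item The bound $\dim_{\mathrm{nuc}}(B\rtimes F)\le|F|(\dim_{\mathrm{nuc}}B+1)-1$ is not an available theorem. The Hirshberg--Winter--Zacharias estimate of this shape has $\dim_{\mathrm{Rok}}+1$ in place of $|F|$ and needs finite Rokhlin dimension. The paper itself warns that finite-group crossed products need not preserve $\Zst$-stability.
\item Rokhlin dimension is unavailable at every step of your induction. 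The trivial representation gives an equivariant quotient $C^*(N)\to\C$ on which $G/N$ acts trivially. Rokhlin dimension cannot increase under equivariant quotients, and a nontrivial group acting trivially admits no Rokhlin towers. So the $\Z$-steps cannot be driven by Rokhlin dimension estimates either.
\item The bound for $C(X)\rtimes\Z$ with an arbitrary homeomorphism of a finite-dimensional $X$ is due to Hirshberg--Wu. It concerns commutative coefficient algebras only and does not by itself iterate.
\item Your fallback conflates two situations. $C^*(N)=C(\widehat N)$ only for abelian $N$, and then $G/N$ is merely polycyclic and acts non-freely on a torus. For the Fitting subgroup, $G/N$ is indeed virtually abelian, but $C^*(N)$ is noncommutative.
\item The nilpotent sketch is also incomplete. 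Bounds on primitive quotients do not bound the nuclear dimension of the algebra. The $C(X)$-estimate over $\widehat Z$ requires fibre bounds uniform in $\chi$, which is exactly what the induction would have to establish. Moreover, polycyclic groups such as $\Z^2\rtimes_A\Z$ with $A$ hyperbolic have trivial centre, so this device gives nothing there.
\end{itemize}
In short, the substance of the Eckhardt--Wu theorem is assumed rather than reconstructed.
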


	\begin{fact}[Murphy, see {\cite[Theorem~2.1]{BedosOmlandST}}]\label{f:murphy}
		If $G$ is amenable and $\Cr(G,\sigma)$ has a unique tracial state, then $\Cr(G,\sigma)$ is simple.
	\end{fact}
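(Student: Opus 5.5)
The statement is Fact~\ref{f:murphy}: for amenable $G$, if $\Cr(G,\sigma)$ has a unique tracial state then it is simple. The plan is to argue by contradiction. Suppose $J$ is a nontrivial proper closed ideal of $\Cr(G,\sigma)$; we will build a tracial state that vanishes on $J$. Since the canonical trace $\tau$ is faithful, $\tau$ does not vanish on $J$, so this second trace is different from $\tau$, contradicting uniqueness.

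To build the trace, pass to the quotient $B=\Cr(G,\sigma)/J$, which is a nonzero unital $C^*$-algebra. Let $q$ be the quotient map and set $u_g=q(\lambda_\sigma(g))$. These are unitaries satisfying $u_gu_h=\sigma(g,h)u_{gh}$, and they generate $B$. Choose any state $\varphi$ on $B$ and average it over $G$ using a F{\o}lner sequence $(F_n)$ (or a left-invariant mean $m$ on $\ell^\infty(G)$ if $G$ is uncountable):
\[
\psi(b)=m\bigl(g\mapsto \varphi(u_g^*bu_g)\bigr).
\]
The scalar factors coming from $\sigma$ cancel in $\operatorname{Ad}(u_g)$, so $g\mapsto\operatorname{Ad}(u_g)$ is a genuine action of $G$ on $B$. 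Hence $\psi$ is a state on $B$ that is invariant under every inner automorphism $\operatorname{Ad}(u_h)$. Invariance gives $\psi(u_h^*bu_h)=\psi(b)$, equivalently $\psi(bu_h)=\psi(u_hb)$ for all $b\in B$ and $h\in G$. The set of elements $x$ with $\psi(bx)=\psi(xb)$ for all $b$ is a closed subspace. It contains every $u_h$ and is closed under products, since $\psi(bxy)=\psi(ybx)=\psi(xyb)$. It therefore contains the dense $*$-algebra spanned by the $u_h$, and so $\psi$ is tracial. Then $\psi\circ q$ is a tracial state on $\Cr(G,\sigma)$ that vanishes on $J\neq0$, which gives the contradiction.

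The main point needing care is that the averaging uses amenability in the form of an invariant mean defined on all bounded functions. Amenability enters only here, and it is what makes the average conjugation-invariant. The remaining steps are routine checks: $\psi$ is positive and unital because $m$ is a mean, and cocycle phases disappear under conjugation. No simplicity or Kleppner-type input is needed.
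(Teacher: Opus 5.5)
Your argument is correct: averaging an arbitrary state on the quotient over the unitaries $u_g$ with an invariant mean gives a tracial state on $\Cr(G,\sigma)$ that vanishes on $J$, so it cannot equal the faithful canonical trace $\tau$. The paper proves nothing here and only cites Murphy's result via B\'edos--Omland; your averaging argument is the standard mechanism behind that result, namely that every nonzero quotient of a twisted group $C^*$-algebra of an amenable group carries a tracial state.
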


	\begin{fact}[{\cite[Theorem~3.1]{BedosOmlandFC}}]\label{f:bo}
		If $G$ is FC-hypercentral and $(G,\sigma)$ satisfies Kleppner's condition, then $\Cr(G,\sigma)$ has a unique tracial state. Thus, by Fact~\ref{f:murphy}, simplicity of $\Cr(G,\sigma)$, a unique tracial state of $\Cr(G,\sigma)$, and Kleppner's condition for $(G,\sigma)$ are equivalent in this case.
	\end{fact}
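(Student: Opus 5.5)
The statement is Fact~\ref{f:bo}, which is an external result quoted from \cite[Theorem~3.1]{BedosOmlandFC}. Still, here is how I would prove it. There are two parts: that Kleppner's condition gives a unique tracial state, and the resulting equivalence. I would do the equivalence first, since it is formal. A unique tracial state implies simplicity by Fact~\ref{f:murphy}, because FC-hypercentral groups are amenable by Fact~\ref{f:fg}. Simplicity implies that $W^*(G,\sigma)$ is a factor, and hence Kleppner's condition. To see this, suppose Kleppner's condition fails. Then there is a finite nontrivial $\sigma$-regular conjugacy class $C$. A suitably phased sum $\sum_{g\in C} c_g\lambda_\sigma(g)$ is then a nonscalar central element of $\Cr(G,\sigma)$, and this contradicts simplicity. (The finiteness of $C$ and $\sigma$-regularity are exactly what make the phases consistent.) So everything reduces to the implication from Kleppner's condition to a unique tracial state.

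For that implication, let $\phi$ be a tracial state on $\Cr(G,\sigma)$. The goal is to show $\phi(\lambda_\sigma(g))=0$ for all $g\neq e$, since then $\phi=\tau$. I would argue by transfinite induction along the upper FC-series $\{F_\alpha\}$, proving that $\phi$ vanishes on $\lambda_\sigma(g)$ for $g\in F_\alpha\setminus\{e\}$. Limit ordinals are trivial because $F_\lambda$ is a union. For the successor step, take $g\in F_{\alpha+1}\setminus F_\alpha$, so that the image of $g$ in $G/F_\alpha$ has finite conjugacy class. There are two standard mechanisms for getting $\phi(\lambda_\sigma(g))=0$:
\begin{itemize}
\item[(a)] If some $h$ commutes with $g$ and $\sigma(h,g)\neq\sigma(g,h)$, then the trace property gives $\phi(\lambda_\sigma(g))=\phi(\lambda_\sigma(h)\lambda_\sigma(g)\lambda_\sigma(h)^*)$, which is a nontrivial scalar multiple of $\phi(\lambda_\sigma(g))$. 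Hence $\phi(\lambda_\sigma(g))=0$.
\item[(b)] If $g$ has infinitely many conjugates, I would use an averaging argument. The aim is to find unitaries $u$ in the group algebra of the already-controlled part such that $u\lambda_\sigma(g)u^*$ averages over many distinct cosets. Those distinct cosets are orthogonal modulo the elements where $\phi$ is already known to agree with $\tau$.
\end{itemize}

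The precise induction hypothesis I would use is that $\phi$ restricted to $\Cr(F_\alpha,\sigma)$, together with the conditional-expectation structure, equals $\tau$. To make (b) work, I would consider the finite-index centralizer $N$ of $gF_\alpha$ in $G/F_\alpha$. For $h$ in the preimage of $N$, the conjugate $hgh^{-1}$ equals $g\,k_h$ with $k_h\in F_\alpha$. If infinitely many distinct $k_h$ occur, then Cauchy--Schwarz for $\phi$, combined with $\phi=\tau$ on $F_\alpha$, forces $\phi(\lambda_\sigma(g))=0$. Concretely, $\bigl\|\frac1n\sum_{i}\lambda_\sigma(k_{h_i})\bigr\|_{2,\tau}\to0$ when the $k_{h_i}$ are distinct. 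If only finitely many $k_h$ occur, then $g$ has a finite conjugacy class in a finite-index subgroup, and so its full conjugacy class is finite. In that case Kleppner's condition says $g$ is not $\sigma$-regular, and mechanism (a) applies.

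The main obstacle is this successor step. One has to combine (a) and (b) carefully, because $\phi(\lambda_\sigma(g)\lambda_\sigma(k))$ involves mixed products that lie outside $\Cr(F_\alpha,\sigma)$. I would try first to strengthen the induction hypothesis so that it covers $\phi(\lambda_\sigma(x))$ for every $x\notin$ (a suitable normal subgroup).
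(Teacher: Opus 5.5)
The paper gives no proof of this statement. It is quoted as an external result of B\'edos--Omland, so there is no internal argument to compare yours with. Judged on its own, your sketch is a complete proof, and the obstacle you flag in your last paragraph does not exist. In the equivalence part, the detour through factoriality of $W^*(G,\sigma)$ is superfluous. Your central-element argument already shows directly that simplicity forces Kleppner's condition, and Fact~\ref{f:murphy} together with amenability closes the cycle.

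In the successor step you never need the individual mixed values $\phi(\lambda_\sigma(g)\lambda_\sigma(k_h))$. Let $\tilde N$ be the finite-index preimage of the centralizer, and for $h\in\tilde N$ write $hgh^{-1}=gk_h$ with $k_h\in F_\alpha$. The trace property gives $\phi(\lambda_\sigma(g))=c_h\,\phi(\lambda_\sigma(g)\lambda_\sigma(k_h))$ for some $c_h\in\T$. Choose $h_1,\dots,h_n\in\tilde N$ with the $k_{h_i}$ distinct and put $x=\frac1n\sum_i c_{h_i}\lambda_\sigma(k_{h_i})$. Averaging and Cauchy--Schwarz then give
\[
|\phi(\lambda_\sigma(g))|^2=|\phi(\lambda_\sigma(g)x)|^2\leq\phi(\lambda_\sigma(g)\lambda_\sigma(g)^*)\,\phi(x^*x)=\tau(x^*x)=\frac1n .
\]
The only element ever evaluated is $x^*x$, which lies in the copy of $\Cr(F_\alpha,\sigma|_{F_\alpha})$. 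On that subalgebra, $\phi=\tau$ is exactly the plain induction hypothesis $\phi(\lambda_\sigma(k))=0$ for $k\in F_\alpha\setminus\{e\}$. So you need no strengthened hypothesis and no conditional expectation.

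Moreover, your second case can only occur at $\alpha=0$. A finite $\tilde N$-class together with $[G:\tilde N]<\infty$ puts $g$ in $FC(G)=F_1$, and $F_1\subseteq F_\alpha$ for $\alpha\geq1$. Hence mechanism (a), and with it Kleppner's condition, is needed only at the first step; all later steps rest on the Cauchy--Schwarz estimate alone.

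Your route also has a by-product: FC-hypercentrality enters only at the very end. For arbitrary $G$ satisfying Kleppner's condition, the same induction shows that every tracial state $\phi$ satisfies $\phi(\lambda_\sigma(g))=0$ for every nontrivial $g$ in the FC-hypercenter. Amenability is used only through Fact~\ref{f:murphy}.
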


	\begin{remark}\label{r:abelian}
			For abelian $G$, all conjugacy classes are singletons, so Kleppner's condition states precisely that the commutator bicharacter $\beta(x,y)=\sigma(x,y)\overline{\sigma(y,x)}$ is nondegenerate, that is, for every $x\neq e$ there is $y$ with $\beta(x,y)\neq1$. Since abelian groups are FC-hypercentral, Fact~\ref{f:bo} shows that, for abelian $G$, nondegeneracy of $\beta$ is equivalent to simplicity of $\Cr(G,\sigma)$ and to uniqueness of its tracial state. The simplicity statement is classical and goes back to Slawny \cite{Slawny}.
	\end{remark}

		We write $\Zst$ for the Jiang--Su algebra \cite{JiangSu} and call $A$ \emph{$\Zst$-stable} if $A\otimes\Zst\cong A$. A unital $C^*$-algebra $A$ is \emph{approximately divisible} \cite{BKR,TomsWinterASH} if, for every $n\geq2$, there is a sequence of unital $*$-homomorphisms $\mu_k\colon M_n\oplus M_{n+1}\to A$ such that $\|[\mu_k(d),a]\|\to0$ as $k\to\infty$ for every $d\in M_n\oplus M_{n+1}$ and every $a\in A$. A separable approximately divisible $C^*$-algebra is $\Zst$-stable \cite[Theorem~2.3]{TomsWinterASH}. For a free ultrafilter $\omega$ on $\N$, we write $A^{\omega}$ for the norm ultrapower; if $\tau$ is a tracial state on $A$, then $\tau^{\omega}$ denotes the induced tracial state. Finally, $\star$ denotes the reduced free product taken with respect to the GNS representations.

	\section{Selflessness}\label{sec:self}

	\begin{definition}[Robert {\cite{Robert}}]\label{d:selfless}
			A \emph{tracial $C^*$-probability space} is a pair $(A,\tau)$ with $A$ unital and $\tau$ a tracial state. A morphism $(A,\tau)\to(B,\rho)$ is a unital $*$-homomorphism $\theta$ with $\rho\circ\theta=\tau$. Such a pair $(A,\tau)$, with $A\neq\C$ and $\tau$ faithful, is \emph{selfless} if the first-factor embedding
		\[
		(A,\tau)\longrightarrow (A,\tau)\star(A,\tau)
		\]
		into the reduced free product is existential. That is, for some free ultrafilter $\omega$ there is a trace-preserving embedding
			$(A,\tau)\star(A,\tau)\hookrightarrow(A^{\omega},\tau^{\omega})$ whose restriction to the first factor is the diagonal embedding.
	\end{definition}

		We always regard $\Cr(G,\sigma)$ as a tracial $C^*$-probability space with its canonical trace $\tau$. Thus ``$\Cr(G,\sigma)$ is selfless'' means $(\Cr(G,\sigma),\tau)$ is selfless. Similarly, all inclusions of twisted group $C^*$-algebras are taken with respect to the canonical traces, which we suppress from the notation.

	\begin{fact}[Robert {\cite[Theorem~3.1]{Robert}}]\label{f:rob}
			If $(A,\tau)$ is selfless, then $A$ is simple, has a unique tracial state, has strict comparison, and has stable rank one.
	\end{fact}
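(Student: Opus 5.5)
Since Fact~\ref{f:rob} is quoted from Robert, I would not reprove it from scratch; I would only indicate how each conclusion comes out of existentiality of the first-factor embedding $\iota\colon(A,\tau)\to(A,\tau)\star(A,\tau)$. The common mechanism is the following. Any property of $A$ that can be expressed by existential (or $\forall\exists$) approximate statements in the language of tracial $C^*$-algebras, and that holds in the free product $A\star A$ for elements coming from the first factor, transfers to $A$ through the embedding $A\star A\hookrightarrow A^{\omega}$. This works because $A^{\omega}$ satisfies the same such sentences as $A$, by Łoś' theorem for norm ultrapowers.

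The conclusions would then be obtained in the following order.
\begin{enumerate}
\item \emph{Unique trace.} Let $\rho$ be any tracial state on $A$ and let $a\in A$ with $\tau(a)=0$. In the free product, a free unitary Haar-like element $u$ from the second factor can be chosen so that averages $\frac1n\sum_k u^k a u^{-k}$ have small norm. This is a Dixmier-type averaging, which I would get from Powers/Haagerup-type norm estimates for free products. Pulling this back into $A^{\omega}$, and then to $A$ along a subsequence, gives unitaries $v_k\in A$ with $\|\frac1n\sum_k v_k a v_k^*\|$ small. Applying $\rho$ yields $\rho(a)=0$, so $\rho=\tau$.
\item \emph{Simplicity.} The same averaging shows that every nonzero positive $a$ has unitary averages close to $\tau(a)1\neq0$, since $\tau$ is faithful. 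Hence the ideal generated by $a$ contains an invertible element, and $A$ is simple.
\item \emph{Stable rank one.} For $a\in A$, the element $a\,w$, with $w$ a free Haar unitary, is approximable by invertibles in $A\star A$; I would use Dykema--Haagerup--Rørdam-type results for reduced free products. This is an existential statement with parameter $a$, so it descends to $A$. The remaining step of removing $w$ from the approximation is routine.
\item \emph{Strict comparison.} I would use that $A\star A$ contains many free semicircular elements, which allow comparison of positive elements by trace values inside the free product. I would then transfer this to $A^{\omega}$ and to $A$ via the characterization of strict comparison by approximate Cuntz subequivalence. This is existential in the witness $r$ with $r^*br\approx a$.
\end{enumerate}

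The main obstacle is strict comparison. It requires a genuine free-probability comparison theorem in $A\star A$ for elements of the first factor, and only the transfer step is formal. Since the paper cites \cite[Theorem~3.1]{Robert}, I would defer to that argument at this point rather than reconstructing it.
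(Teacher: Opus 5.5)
Citing \cite[Theorem~3.1]{Robert} is also what the paper does, so as a citation your proposal is fine. Your ingredients are the natural ones: existential statements with parameters from the first factor are transferred along $A\star A\hookrightarrow A^{\omega}$ and then down to $A$, Dixmier-type averaging gives the trace and simplicity, and Dykema--Haagerup--R{\o}rdam gives stable rank one. As an argument, however, the sketch has a genuine gap. In steps 1, 3 and 4 you take a Haar unitary (or semicircular elements) free from $A$ ``from the second factor'' of $A\star A$. The second factor is just another copy of $(A,\tau)$, and nothing guarantees such elements there. Take $A=\C\oplus B$ with $\tau(z\oplus b)=tz+(1-t)\tau_B(b)$, where $\tau_B$ is a faithful tracial state and $1/2<t<1$. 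Every unitary of $A$ then has $|\tau(u)|\geq 2t-1>0$. Worse, no averaging inside $A\star A$ can work. Let $p_1,p_2$ be the two copies of $1\oplus0$. The projection $q=p_1\wedge p_2\in W^*(A\star A)$ has $\tau(q)\geq 2t-1>0$, and $xq=qx\in\C q$ for $x$ in either copy of $A$. So $q$ gives a one-dimensional subrepresentation of the GNS representation, that is, a character $\chi$ on $A\star A$ with $\chi(p_1)=1$. For the trace-zero element $a=p_1-t1$, every average of unitary conjugates of $a$ in $A\star A$ then has norm at least $\chi(a)=1-t$. Such nonsimple $A$ are exactly what the statement must rule out, so you cannot assume them away.

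The missing idea is a preliminary reduction. Selflessness must first be upgraded to existentiality of a first-factor embedding $(A,\tau)\to(A,\tau)\star(C,\kappa)$ with $C$ containing a Haar unitary (or a semicircular family) free from $A$. One route is to show that existential embeddings survive free products with a fixed factor, so that $A\to A^{\star n}$ and $A\to A^{\star\infty}$ are existential. Then produce the free Haar unitary in an ultrapower of $A^{\star\infty}$, for instance by a free central limit argument with convergence of norms. This step is not a formal consequence of the definition: $\tau^{\omega}$ is not faithful on $A^{\omega}$, so you need control of reduced free product norms under such limits. Once the reduction is in place, your steps 1--3 go through along the lines you indicate. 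In step 3, removing $w$ is immediate, since $\|aw-b\|=\|a-bw^*\|$ and $bw^*$ is invertible. Step 4 contains no actual comparison argument. Leaving it as a pointer to Robert is acceptable only because the paper itself just quotes the result.
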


	\begin{fact}[Ozawa {\cite[Theorem~3]{Ozawa}}, cf.\ {\cite{HKEPR}}]\label{f:oza}
		A simple, separable, unital, exact, $\Zst$-stable $C^*$-algebra with a unique tracial state is selfless.
	\end{fact}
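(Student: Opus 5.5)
The plan is to realise the reduced free product inside the ultrapower by conjugating with a suitable unitary. Concretely, I will produce a unitary $u\in A^{\omega}$ such that the unital $*$-homomorphism $A\star_{\mathrm{alg}}A\to A^{\omega}$, $a\star b\mapsto a\,(ubu^*)$, extends to a trace-preserving embedding of $(A,\tau)\star(A,\tau)$. Here $A$ sits diagonally in $A^\omega$, and the first factor is mapped onto the diagonal copy, as Definition~\ref{d:selfless} requires. Two things must be checked:
\begin{itemize}
\item \emph{freeness in distribution}: $A$ and $uAu^*$ are free with respect to $\tau^{\omega}$;
\item \emph{norm control}: for every noncommutative polynomial $p$ in elements of the two copies, $\|p\|_{A^\omega}$ equals the norm of $p$ in the reduced free product.
\end{itemize}
The lower bound $\|p\|_{A^\omega}\geq\|p\|_{\star}$ is automatic once freeness holds. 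Indeed, $\tau^\omega$ then restricts to the free product trace, whose GNS representation defines the reduced free product because $\tau$ is faithful. So the real issue is the upper bound.

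\textbf{Step 1: unique trace and strict comparison in the ultrapower.} First I would extract the regularity available in $A^\omega$. Since $A$ is simple, exact and $\Zst$-stable, quasitraces are traces by Haagerup's theorem, and $A$ has strict comparison by R{\o}rdam. With a unique tracial state, the Matui--Sato machinery (property (SI) and excision) shows that the central sequence algebra $A'\cap A^{\omega}$ is large. Moreover, every tracial state on $A^\omega$ restricted to the tracial ultrapower picture is $\tau^\omega$ up to the trace-kernel ideal $J$, and $A^\omega/J$ is the tracial ultrapower, which is a II$_1$ factor. In that von Neumann ultrapower one can choose a Haar unitary free from $A$ by standard arguments, namely embeddability of $\pi_\tau(A)''$, which is hyperfinite or at least $R^\omega$-embeddable by exactness. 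Lifting it unitarily to $A^\omega$ gives freeness in distribution.

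\textbf{Step 2: upgrading to norm control (main obstacle).} A lift of an abstract free Haar unitary gives no norm bound. I would instead build $u$ concretely from matrix algebras. $\Zst$-stability together with strict comparison yields, approximately centrally, unital copies of dimension-drop algebras. Up to a small trace defect, and hence up to an error controlled via comparison, these are copies of $M_n$ commuting with $A$. Inside these copies I take $u$ to be an $n\times n$ Haar-random unitary. I then invoke strong asymptotic freeness of random unitaries tensored with an exact $C^*$-algebra (Haagerup--Thorbj{\o}rnsen, Collins--Male, Pisier). This gives $\|p(a\otimes1,\,u(b\otimes1)u^*)\|\to\|p\|_{\star}$ almost surely for $p$ with coefficients in $A\otimes M_n$, and this is exactly where exactness of $A$ is used. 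A diagonal argument over $n$, finite sets in $A$, and polynomials of bounded degree, organised via Kirchberg's $\varepsilon$-test, produces a single $u\in A^\omega$ with both freeness and the correct norms.

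\textbf{Step 3: handling the defect.} The delicate point is that the approximately central matrix copies are only available modulo small-trace remainders, whose norms are not small. I would absorb the remainder by using strict comparison to place it under a projection-like element of small trace. On that corner I would repeat the construction, or conjugate it into the main block, so that in the limit the remainder contributes nothing to the norm of any polynomial. If this fails directly, the fallback is to use $A\cong A\otimes\Zst$ and work with the trivial-$K$-theory structure of $\Zst$: each dimension-drop algebra contains $M_n$ in a corner of trace close to $1$, and the complement can be filled by a second $M_{n+1}$-type corner, as in the approximate divisibility picture. Random unitaries are then taken blockwise in both corners, which preserves the strong convergence estimate.
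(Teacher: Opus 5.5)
There is a genuine gap, and it sits in Step~2, which you rightly identify as the heart of the matter. A unitary $u$ taken inside approximately central copies of $M_n$ (or of dimension-drop algebras) defines an element of $A^{\omega}\cap A'$. Hence $uau^*=a$ for all $a\in A$, and your "second copy" $uAu^*$ is just the diagonal copy of $A$ again; indeed $p(a\otimes1,u(b\otimes1)u^*)=p(a\otimes1,b\otimes1)$. Nothing free can come out of this. For $0\neq a\in A$ with $\tau(a)=0$, freeness would force $\tau^{\omega}(a\,ua^*u^*)=0$, whereas here it equals $\tau(aa^*)>0$.

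The strong-convergence theorems you cite make random unitaries asymptotically free from \emph{deterministic matrices acting on the same} $\C^n$. Coefficients from an exact algebra $A$ commute with everything, so the limiting norms are those of $A\otimes_{\min}C^*_r(\mathbb{F}_k)$, not those of $A\star C(\T)$. Random matrices help only if the Haar unitary is drawn from a matrix algebra that approximately \emph{contains} the relevant finite subset of $A$. This is what happens for a UHF algebra, where $M_k\otimes1\subseteq M_k\otimes M_m$ and Collins--Male applies directly.

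A general $A$ as in the statement has no such blocks. For instance, $A=\Zst$ has no nontrivial projections, even in $\Zst^{\omega}$. This also defeats your Step~3 fallback: $\Zst$ is not approximately divisible, and dimension-drop algebras are projectionless, so they contain no corner copy of $M_n$. Step~1 gives at most freeness in distribution, which, as you say yourself, gives no norm control. Step~2 then discards that unitary anyway.

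What is missing is a mechanism that produces unitaries which genuinely move $A$ while keeping operator norms under control. This is where simplicity and uniqueness of the trace must enter essentially. In the framework the paper relies on, that mechanism is the Dixmier property. For simple unital $A$, having a unique tracial state is equivalent (Haagerup--Zsid\'o) to $\tau(a)1\in\overline{\mathrm{conv}}\{vav^*:v\in\U(A)\}$ for every $a\in A$.

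The statement is precisely the case $B=A$ of Fact~\ref{f:incl}:
\begin{itemize}
\item $A\subseteq A$ is trivially a $\Zst$-stable inclusion when $A$ is $\Zst$-stable;
\item the relative Dixmier property for $A\subseteq A$ is just the Dixmier property;
\item $A$ is exact;
\item $\tau$ is unitarily invariant.
\end{itemize}
A selfless inclusion $A\subseteq A$ then gives selflessness of $A$ by Fact~\ref{f:selfincl-conseq}. So the averaging is done with unitaries of $A$ itself, and $\Zst$-stability and exactness are the remaining hypotheses. Your proposal never uses the Dixmier property or any substitute for it. Without it, there is no reason for the norm of a polynomial in $A$ and $u$ to drop to its reduced-free-product value.
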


	\begin{fact}[Matui--Sato {\cite{MatuiSato}}]\label{f:ms}
		A simple, separable, unital, nuclear, infinite-dimensional $C^*$-algebra with a unique tracial state and strict comparison is $\Zst$-stable.
	\end{fact}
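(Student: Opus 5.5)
This is Matui and Sato's theorem, which the paper quotes rather than proves. If I had to reconstruct it, I would use the central sequence algebra. The target criterion is the standard one: a separable unital $A$ is $\Zst$-stable once, for every $k$, there is a unital $*$-homomorphism from the dimension drop algebra $I_{k,k+1}$ into $F(A)=A^\omega\cap A'$. An equivalent form asks for a c.p.c.\ order zero map $\phi\colon M_k\to F(A)$ together with an element $v\in F(A)$ satisfying $v^*v=1-\phi(1)$ and $\phi(e_{11})v=v$. The plan is to build $\phi$ tracially first, and then upgrade it to an honest element of $F(A)$ using strict comparison.

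\textbf{Step 1 (the tracial picture).} Let $\tau$ be the unique tracial state and $M=\pi_\tau(A)''$.
\begin{itemize}
\item Since $\tau$ is extremal, $M$ is a II$_1$ factor. It is not type I$_n$ because $A$ is simple and infinite-dimensional.
\item Since $A$ is nuclear, $M$ is injective, so by Connes $M\cong R$, the hyperfinite II$_1$ factor.
\item $R$ is McDuff, so for each $k$ there is a unital embedding $M_k\to R^\omega\cap R'$.
\end{itemize}

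\textbf{Step 2 (lifting to $F(A)$).} I would show that the canonical map $F(A)\to R^\omega\cap R'$ is surjective. Kaplansky density gives approximants of a central sequence of $R$ by elements of $A$. To make these approximants asymptotically central in norm, not just in $\|\cdot\|_2$, I would average them using nuclearity: take approximately central completely positive approximations, or equivalently quasicentral approximate units in the sense of Kirchberg's excision.

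Order zero maps from $M_k$ lift, since the cone over $M_k$ is projective. This yields a c.p.c.\ order zero map $\phi\colon M_k\to F(A)$ with $\tau_\omega(1-\phi(1))=0$. In other words, $1-\phi(1)$ lies in the trace-kernel ideal.

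\textbf{Step 3 (property (SI), the main obstacle).} This step is where strict comparison enters. One must show that if $e,f\in F(A)$ are positive contractions with $\tau_\omega(e)=0$ and $\inf_m\tau_\omega(f^m)>0$, then there is $s\in F(A)$ with $s^*s=e$ and $fs=s$. I would attempt it in three moves:
\begin{enumerate}
\item Represent $e$ and $f$ by sequences $(e_n)$ and $(f_n)$.
\item Use strict comparison in $A$: for every $\epsilon>0$, the tracial size of $e_n$ is eventually small compared with that of $(f_n-\epsilon)_+$. Comparison then yields elements $r_n$ with $r_n^*(f_n-\epsilon)_+r_n\approx e_n$.
\item Enforce approximate centrality of the $r_n$ with a reindexing or diagonal argument, again using quasicentral approximate units.
\end{enumerate}
The delicate part is combining comparison, which is a statement about single elements, with centrality, which concerns sequences. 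This is the technical core of Matui and Sato's argument.

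\textbf{Step 4 (conclusion).} Apply (SI) with $e=1-\phi(1)$ and $f=\phi(e_{11})$. The hypothesis on $f$ holds because $\tau_\omega(\phi(e_{11})^m)=1/k$ for all $m$. This gives $v$ with $v^*v=1-\phi(1)$ and $\phi(e_{11})v=v$. The pair $(\phi,v)$ determines a unital embedding $I_{k,k+1}\to F(A)$, so $A$ is $\Zst$-stable.
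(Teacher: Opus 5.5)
The paper does not prove this Fact; it only cites Matui--Sato. Your architecture is theirs: the McDuff property of $\pi_\tau(A)''\cong R$, surjectivity of $A^\omega\cap A'\to R^\omega\cap R'$, lifting through the projective cone over $M_k$, property (SI), and the dimension-drop criterion with $I_{k,k+1}$. Steps 1, 2 and 4 are fine. (The surjectivity in Step 2 in fact holds for every separable $A$, because the trace-kernel ideal $J$ is a $\sigma$-ideal.)

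The gap is Step 3, and the mechanism you propose there cannot work. Comparison in $A$ gives witnesses $r_n$, but nothing makes them approximately commute with $A$. (You must apply comparison to $(e_n-\epsilon)_+$, since $\tau(e_n)\to0$ does not make $d_\tau(e_n)$ small.)
\begin{itemize}
\item Reindexing or a diagonal argument only upgrades approximate solutions of every finite subsystem of the relations to exact ones. You would therefore already need, for each finite $F\subseteq A$ and $\delta>0$, comparison witnesses that $\delta$-commute with $F$, and that is the whole problem.
\item Quasicentral approximate units of $J$, as in Step 2, only correct an element modulo $J$. The element you want satisfies $\tau_\omega(s^*s)=\tau_\omega(e)=0$, so it lies entirely in $J$, and such a correction would annihilate it.
\end{itemize}
Your Step 3 also never uses nuclearity, whereas this is exactly where Matui--Sato need it.

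The missing idea is to make centrality a consequence of an exact identity, not of a clever choice of witnesses. The excision argument produces $s\in A^\omega$ with $fs=s$ and $s^*as=ea$ for all $a\in A$. The map $a\mapsto ea$ is completely positive because $e\in A'$. Then $s^*s=e$, and
\[
(as-sa)^*(as-sa)=s^*a^*as-(s^*a^*s)a-a^*(s^*as)+a^*(s^*s)a=ea^*a-ea^*a-a^*ea+a^*ea=0,
\]
so $s\in A^\omega\cap A'$ automatically, which is (SI).

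Building $s$ is the real work, and roughly goes as follows:
\begin{itemize}
\item nuclearity lets you approximate $a\mapsto ea$ by c.p.\ maps factoring through matrix algebras;
\item simplicity lets you approximate the resulting states by pure states and excise them in the sense of Akemann--Anderson--Pedersen;
\item strict comparison, fed by $\tau_\omega(e)=0<\inf_m\tau_\omega(f^m)$, lets you place the excising elements under $f$.
\end{itemize}
Without this device, or an equivalent one, Step 3 and hence the whole argument is incomplete.
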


	\begin{fact}[Toms--Winter {\cite{Winter,CETWW}}]\label{f:tw}
		For simple, separable, unital, nuclear, infinite-dimensional $C^*$-algebras, finite nuclear dimension and $\Zst$-stability are equivalent.
	\end{fact}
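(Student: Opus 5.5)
This statement is an external result, Fact~\ref{f:tw}, which the paper quotes rather than proves. I will therefore only outline how the known proof goes, using Facts~\ref{f:ms} and~\ref{f:oza} as black boxes where helpful. Throughout, $A$ is simple, separable, unital, nuclear and infinite-dimensional.

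\emph{Finite nuclear dimension implies $\Zst$-stability.} This is Winter's theorem \cite{Winter}, and I would follow his route in three steps.

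First, show that finite nuclear dimension gives $A$ strict comparison of positive elements with respect to its tracial states. Robert's theorem gives this: a decomposition of rank at most $n$ yields $(m,\bar m)$-pureness.

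Second, prove that $A$ is tracially $\Zst$-absorbing, or more concretely that it admits approximately central unital embeddings of the dimension-drop algebras $Z_{k,k+1}$ into $A_\omega\cap A'$. To do this, use the order-zero maps $M_k\to A_\omega\cap A'$ that come from the nuclear dimension decomposition together with comparison. Finite nuclear dimension lets one glue $n+1$ order-zero pieces, each with its own approximately central structure, into a single approximately central c.p.c.\ order-zero map. Strict comparison then supplies the Cuntz subequivalence $1-\phi(1)\precsim\phi(e_{11})$. By the standard Rørdam--Winter characterization, this produces $Z_{k,k+1}\to A_\omega\cap A'$.

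Third, since $A$ is separable, conclude $A\cong A\otimes\Zst$.

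The main obstacle is the second step: upgrading pointwise comparison to \emph{central} comparison inside $A_\omega\cap A'$. Winter handles this by the covering-dimension-type gluing argument.

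\emph{$\Zst$-stability implies finite nuclear dimension.} This is the Castillejos--Evington--Tikuisis--White--Winter theorem \cite{CETWW}, and here I would proceed in two steps.

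First, use uniform property $\Gamma$ and complemented partitions of unity in the central sequence algebra. These give $\Zst$-stable nuclear algebras finite decomposition rank relative to the tracial ultrapower.

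Second, lift the resulting tracial approximations to norm approximations. This uses the uniqueness theorem for order-zero maps from cones into $\Zst$-stable algebras. The outcome is the bound $\dim_{\mathrm{nuc}}A\leq1$.

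In the unique-trace case, which is all this paper uses, the argument simplifies. Property $\Gamma$ follows from the McDuff property of $\pi_\tau(A)''$, which is $\mathcal R$ by Connes' theorem. The remaining obstacle is the norm lifting via $\Zst$-stability, for which I would cite the two-coloured classification of maps in \cite{CETWW}.
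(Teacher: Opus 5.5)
Your proposal is correct as a citation, and in that sense it matches the paper: the paper gives no argument for this Fact and simply quotes \cite{Winter} and \cite{CETWW}. Your observation that every use of the Fact in the paper is in the unique-trace setting is also correct; there uniform property $\Gamma$ is automatic from $\pi_\tau(A)''\cong\mathcal R$. Your outline of the converse direction, via $\Gamma$, complemented partitions of unity and two-coloured uniqueness, is accurate.

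Your sketch of Winter's direction, however, starts from a false premise. Finite nuclear dimension is not known to give strict comparison directly. Robert's theorem gives only $n$-comparison, with $n=\dim_{\mathrm{nuc}}A$. The comparison part of $(m,\bar m)$-pureness is exactly such an $m$-comparison, supplemented by an almost-divisibility condition. It is not strict comparison, which is $0$-comparison. For these algebras, strict comparison is only known a posteriori, as a consequence of $\Zst$-stability \cite{RordamZ}, so it cannot be an input to the proof.

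In the unique-trace case this is especially clear. Strict comparison would give $\Zst$-stability at once by Fact~\ref{f:ms}, so your second and third steps would be superfluous and all the difficulty would sit in the unproved first step. The actual content of Winter's proof is to obtain the central subequivalence $1-\phi(1)\precsim\phi(e_{11})$ in $A_\omega\cap A'$ from this weak comparison, using the $(n+1)$-coloured decomposition. As a reconstruction of the argument, your first step should therefore be replaced by this weaker input.
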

	
	\begin{fact}[Castillejos--Evington--Tikuisis--White--Winter {\cite[Corollary~C]{CETWW}}]\label{f:nucdim}
			The nuclear dimension of a simple, separable, unital $C^*$-algebra belongs to $\{0,1,\infty\}$ and equals zero precisely when the algebra is AF.
	\end{fact}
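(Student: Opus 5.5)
Since this is a cited result, the plan is to assemble it from the structural theorems behind it rather than prove it from scratch. Let $A$ be simple, separable and unital. First I would dispose of the non-nuclear case. A $C^*$-algebra of finite nuclear dimension is nuclear, because the completely positive approximations defining $\dim_{\mathrm{nuc}}$ are themselves approximate factorizations through finite-dimensional algebras. So if $A$ is not nuclear, $\dim_{\mathrm{nuc}}A=\infty$ and there is nothing to prove. Next, if $A$ is finite-dimensional, simplicity forces $A\cong M_n$, which is AF and has nuclear dimension $0$. Hence we may assume $A$ is nuclear, infinite-dimensional and $\dim_{\mathrm{nuc}}A<\infty$, and the task is to show $\dim_{\mathrm{nuc}}A\le 1$.

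Under these assumptions Fact~\ref{f:tw} gives $A\otimes\Zst\cong A$. The key step is then the main theorem of \cite{CETWW}: a simple, separable, unital, nuclear, $\Zst$-stable $C^*$-algebra has nuclear dimension at most one. I would follow their route. First, use $\Zst$-stability to establish uniform property $\Gamma$ for $A$, that is, approximately central projections in the uniform tracial ultrapower that divide every trace. Second, via a Connes--Haagerup/Ozawa-type argument, show that every tracial von Neumann completion is hyperfinite, uniformly over the trace simplex. Third, build two-coloured completely positive approximations: one order-zero piece is obtained tracially through the uniform tracial completion, and the other is provided by a complementary $\Zst$-stable ``positive contraction'' trick that lifts the tracial approximation back to the norm ultrapower.

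I expect this third step, passing from tracial to norm control with exactly two colours, to be the main obstacle. It is here that uniform property $\Gamma$ is needed to handle a possibly large trace simplex, and where the approximations must be built in the central sequence algebra $A_\omega\cap A'$.

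Finally, for the characterization of $0$, I would invoke Winter--Zacharias: a separable $C^*$-algebra has nuclear dimension $0$ exactly when it is AF. One direction holds because order-zero maps out of finite-dimensional algebras with a single colour are almost multiplicative, so they can be perturbed to $*$-homomorphisms, giving local finite-dimensional approximation and, by separability, an AF structure. Conversely, inductive-limit connecting maps give $0$-decomposable approximations. Combined with the above, the nuclear dimension therefore lies in $\{0,1,\infty\}$, and it equals $0$ precisely when $A$ is AF.
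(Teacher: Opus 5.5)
Your reduction is correct and is exactly how Corollary~C is obtained in \cite{CETWW}; the paper itself just cites the corollary. The steps are: finite nuclear dimension forces nuclearity, the finite-dimensional case is $M_n$, Winter's theorem \cite{Winter} gives $\Zst$-stability, their Theorem~A then gives $\dim_{\mathrm{nuc}}\le1$, and Winter--Zacharias identifies dimension zero with AF. Winter's theorem is the direction of Fact~\ref{f:tw} that does not rest on \cite{CETWW}, so there is no circularity.

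Two of your heuristic asides are loose. A single order-zero map $h\pi(\cdot)$ is not almost multiplicative by itself; one needs $\phi\psi\approx\mathrm{id}$ together with functional calculus on $h$. Also, the key device behind the uniform-over-traces step in \cite{CETWW} is complemented partitions of unity, which you do not name. Since you invoke these theorems rather than reprove them, nothing is missing.
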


		Let $(A,\tau)$ be a tracial $C^*$-probability space with $A$ separable, unital, nuclear, and infinite-dimensional. Then $(A,\tau)$ is selfless if and only if $A$ is simple, has a unique tracial state, and is $\Zst$-stable. Indeed, selflessness implies simplicity, a unique tracial state, and strict comparison by Fact~\ref{f:rob}, hence $\Zst$-stability by Fact~\ref{f:ms}. Conversely, simplicity, exactness, $\Zst$-stability, and a unique tracial state imply selflessness by Fact~\ref{f:oza}. By Fact~\ref{f:tw}, in this equivalence one may replace $\Zst$-stability by finite nuclear dimension. Moreover, for a simple $A$ with a unique tracial state, $\Zst$-stability implies strict comparison \cite{RordamZ}, while strict comparison implies $\Zst$-stability by Fact~\ref{f:ms}. Thus, for a simple $A$ with a unique tracial state, the three regularity properties $\Zst$-stability, finite nuclear dimension, and strict comparison coincide.

	\begin{proposition}[Amenable groups]\label{p:am}
		Let $G$ be a countably infinite amenable group with a two-cocycle $\sigma$. Then the following are equivalent:
		\begin{itemize}
			\item[(i)] $\Cr(G,\sigma)$ is selfless;
			\item[(ii)] $\Cr(G,\sigma)$ has a unique tracial state and is $\Zst$-stable;
			\item[(iii)] $\Cr(G,\sigma)$ has a unique tracial state and has finite nuclear dimension.
		\end{itemize}
	\end{proposition}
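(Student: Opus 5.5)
The plan is to reduce everything to the general equivalence for nuclear algebras recorded just before the statement, after checking that $A=\Cr(G,\sigma)$ satisfies its standing hypotheses. Since $G$ is countable and amenable, $A$ is separable, unital and nuclear. It is infinite-dimensional because the unitaries $\lambda_\sigma(g)$, $g\in G$, are orthonormal in $L^2(A,\tau)$ (we have $\tau(\lambda_\sigma(g)^*\lambda_\sigma(h))=0$ for $g\neq h$) and $G$ is infinite. The canonical trace $\tau$ is faithful, and $A\neq\C$. So for $(A,\tau)$ the discussion preceding the statement gives: $(A,\tau)$ is selfless if and only if $A$ is simple, has a unique tracial state, and is $\Zst$-stable, and here $\Zst$-stability may be replaced by finite nuclear dimension.

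The only remaining point, and the only place where the twisted group structure enters, is to remove simplicity from the list. This is exactly Fact~\ref{f:murphy}: since $G$ is amenable, a unique tracial state on $\Cr(G,\sigma)$ forces simplicity. Note also that if $A$ has a unique tracial state, that trace must be the canonical $\tau$, so selflessness of $(A,\tau)$ is the relevant notion.

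The implications then run as follows. For (i)$\Rightarrow$(ii), Fact~\ref{f:rob} gives simplicity, a unique tracial state and strict comparison, and Fact~\ref{f:ms} upgrades these to $\Zst$-stability. For (ii)$\Rightarrow$(i), Fact~\ref{f:murphy} gives simplicity, and then Fact~\ref{f:oza} applies, using that nuclear implies exact. For (ii)$\Leftrightarrow$(iii), simplicity again comes from Fact~\ref{f:murphy}, and then Fact~\ref{f:tw} identifies $\Zst$-stability with finite nuclear dimension.

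None of these steps is a real obstacle, since each is a citation. The only care needed is to verify infinite-dimensionality and to make sure simplicity is obtained from Fact~\ref{f:murphy} before invoking Facts~\ref{f:oza} and~\ref{f:tw}, both of which require it.
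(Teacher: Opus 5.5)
Your proposal is correct and is essentially the paper's proof: you use Fact~\ref{f:murphy} to get simplicity from the unique tracial state, and then apply the general equivalence for separable, unital, nuclear, infinite-dimensional algebras stated just before the proposition. Your extra checks, namely infinite-dimensionality via orthonormality of the $\lambda_\sigma(g)$ and the observation that the unique trace must be the canonical $\tau$, are details the paper leaves implicit.
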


	\begin{proof}
			In (ii) and (iii), $\Cr(G,\sigma)$ has a unique tracial state, so it is simple by Fact~\ref{f:murphy}, and the equivalences follow directly from the above.
	\end{proof}

	\begin{corollary}[FC-hypercentral groups]\label{t:fc}
		Let $G$ be a countably infinite FC-hypercentral group with a two-cocycle $\sigma$. Then the following are equivalent:
		\begin{itemize}
			\item[(i)] $\Cr(G,\sigma)$ is selfless;
			\item[(ii)] $(G,\sigma)$ satisfies Kleppner's condition and $\Cr(G,\sigma)$ is $\Zst$-stable;
			\item[(iii)] $(G,\sigma)$ satisfies Kleppner's condition and $\Cr(G,\sigma)$ has finite nuclear dimension.
		\end{itemize}
	\end{corollary}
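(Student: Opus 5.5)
The plan is to reduce directly to Proposition~\ref{p:am}, replacing ``unique tracial state'' by Kleppner's condition via Fact~\ref{f:bo}. First, by Fact~\ref{f:fg}, every FC-hypercentral group is amenable. So $G$ is a countably infinite amenable group, and Proposition~\ref{p:am} applies to $(G,\sigma)$. It states that (i) is equivalent to ``$\Cr(G,\sigma)$ has a unique tracial state and is $\Zst$-stable'', and also to ``$\Cr(G,\sigma)$ has a unique tracial state and has finite nuclear dimension''.

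Second, Fact~\ref{f:bo} applies because $G$ is FC-hypercentral. It gives that $\Cr(G,\sigma)$ has a unique tracial state if and only if $(G,\sigma)$ satisfies Kleppner's condition. One direction is \cite[Theorem~3.1]{BedosOmlandFC}. For the converse, a unique tracial state gives simplicity by Fact~\ref{f:murphy}, and simplicity (or factoriality of $W^*(G,\sigma)$) forces Kleppner's condition, as recorded in Fact~\ref{f:bo}. Substituting this equivalence into conditions (ii) and (iii) of Proposition~\ref{p:am} yields exactly conditions (ii) and (iii) of the corollary. Hence (i)$\Leftrightarrow$(ii)$\Leftrightarrow$(iii).

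There is essentially no obstacle; the argument is bookkeeping. The only point to check is that the hypotheses of Proposition~\ref{p:am} hold. Countability and infiniteness of $G$ are assumed, and amenability comes from Fact~\ref{f:fg}. Infinite-dimensionality of $\Cr(G,\sigma)$, which the Toms--Winter and Matui--Sato facts used inside Proposition~\ref{p:am} require, follows from $G$ being infinite, since the $\lambda_\sigma(g)$ are linearly independent.
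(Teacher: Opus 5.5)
Your proposal is correct and follows the paper's proof: Fact~\ref{f:bo} turns ``unique tracial state'' into Kleppner's condition, and Proposition~\ref{p:am} applies because $G$ is amenable by Fact~\ref{f:fg}. Your extra checks, namely the converse direction via Fact~\ref{f:murphy} and infinite-dimensionality from $G$ being infinite, are bookkeeping that the paper leaves implicit in those cited results.
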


	\begin{proof}
		By Fact~\ref{f:bo}, Kleppner's condition for $(G,\sigma)$ is equivalent to $\Cr(G,\sigma)$ having a unique tracial state. Since $G$ is amenable, the claims follow from Proposition~\ref{p:am}.
	\end{proof}

	\begin{theorem}[Virtually nilpotent groups]\label{t:vn}
		Let $G$ be an infinite finitely generated virtually nilpotent group with a two-cocycle $\sigma$. Then the following are equivalent:
		\begin{itemize}
			\item[(i)] $\Cr(G,\sigma)$ is selfless;
			\item[(ii)] $(G,\sigma)$ satisfies Kleppner's condition.
		\end{itemize}
	\end{theorem}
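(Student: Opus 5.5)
The plan is to reduce Theorem~\ref{t:vn} to Corollary~\ref{t:fc}, using Fact~\ref{f:ew} to supply the regularity for free. First I record the group-theoretic setup. By Fact~\ref{f:fg}, an infinite finitely generated virtually nilpotent group $G$ is FC-hypercentral and virtually polycyclic. Being finitely generated, $G$ is countable, and it is infinite by hypothesis, so Corollary~\ref{t:fc} applies to $(G,\sigma)$.

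For (i)$\Rightarrow$(ii), assume $\Cr(G,\sigma)$ is selfless. Then Corollary~\ref{t:fc} gives condition (ii) of that corollary, which in particular contains Kleppner's condition. One could also argue directly: Fact~\ref{f:rob} gives a unique tracial state, and Fact~\ref{f:bo} turns this into Kleppner's condition.

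For (ii)$\Rightarrow$(i), assume $(G,\sigma)$ satisfies Kleppner's condition. Since $G$ is virtually polycyclic, Fact~\ref{f:ew} shows that $\Cr(G,\sigma)$ has finite nuclear dimension. So condition (iii) of Corollary~\ref{t:fc} holds, and that corollary yields selflessness.

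There is no real obstacle once the earlier Facts are granted. The only points needing care are that the hypotheses of the Facts feeding Corollary~\ref{t:fc} are met. Fact~\ref{f:tw} needs $\Cr(G,\sigma)$ to be simple, separable, unital, nuclear and infinite-dimensional. Simplicity comes from Fact~\ref{f:bo}. Separability holds because $G$ is countable. Nuclearity holds because $G$ is amenable by Fact~\ref{f:fg}. Infinite-dimensionality holds because the unitaries $\lambda_\sigma(g)$, $g\in G$, are orthonormal in $L^2(\tau)$ and $G$ is infinite. All of these are already built into Proposition~\ref{p:am}, so the argument amounts to a short chain of citations.
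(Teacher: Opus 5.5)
Your proposal is correct and follows the paper's own proof: both use Fact~\ref{f:fg} to see that $G$ is FC-hypercentral and virtually polycyclic, use Fact~\ref{f:ew} for finite nuclear dimension, and then reduce to Corollary~\ref{t:fc}. The only difference is cosmetic: you invoke condition (iii) of Corollary~\ref{t:fc} directly, whereas the paper first turns finite nuclear dimension into $\Zst$-stability via Facts~\ref{f:bo} and~\ref{f:tw} and then uses condition (ii).
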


	\begin{proof}
			By Fact~\ref{f:fg}, $G$ is both FC-hypercentral and virtually polycyclic. Hence $\Cr(G,\sigma)$ has finite nuclear dimension by Fact~\ref{f:ew}. Under Kleppner's condition, $\Cr(G,\sigma)$ is simple by Fact~\ref{f:bo}. Since $G$ is infinite, $\Cr(G,\sigma)$ is infinite-dimensional and hence $\Zst$-stable by Fact~\ref{f:tw}. Thus condition~(ii) of Corollary~\ref{t:fc} reduces to Kleppner's condition, and the equivalence follows from Corollary~\ref{t:fc}.
	\end{proof}

	\begin{remark}[Noncommutative tori and nilpotent groups]\label{r:nct}
			For $G=\Z^n$ with $\sigma_\Theta(x,y)=e^{\pi i\langle x,\Theta y\rangle}$ and $\Theta$ real antisymmetric, Kleppner's condition is satisfied if and only if $\Theta x\notin\Z^n$ for every nonzero $x\in\Z^n$, which is the classical simplicity criterion for the noncommutative torus $A_\Theta=\Cr(\Z^n,\sigma_\Theta)$. Thus Theorem~\ref{t:vn} says that every simple noncommutative torus is selfless. For $n=2$ this happens exactly when $\theta$ is irrational. For concrete examples of twisted group $C^*$-algebras of nilpotent groups with explicit simplicity criteria in terms of $\sigma$, see \cite{OmlandFreeNilpotent}.
	\end{remark}

		The proof for virtually nilpotent groups uses the finite-nuclear-dimension theorem for virtually polycyclic groups, which already accommodates finite extensions, while $\Zst$-stability is not preserved by arbitrary (twisted) finite-group crossed products. In the non-selfless situations, there is no known characterization of $\Zst$-stability of $\Cr(G,\sigma)$ in general. For finitely generated nilpotent groups, nowhere scatteredness, pureness, $\Zst$-stability, and a generalized version of nonrationality of $\sigma$ are equivalent \cite[Theorem~B]{EnstadVilalta}. Their Question~4.9 asks whether the equivalence survives without finite generation. For arbitrary abelian groups, nowhere scatteredness remains equivalent to their nonrationality property. The proof of $\Zst$-stability, however, realizes $\Cr(G,\sigma)$ as a section algebra over $\widehat{Z(G,\sigma)}$, where $Z(G,\sigma)$ denotes the subgroup of $\sigma$-regular central elements, and passes $\Zst$-stability from the fibres. This step requires the base space to be finite-dimensional, which is where finite generation is used.

		Therefore, whether Theorem~\ref{t:vn} extends to all countably infinite FC-hypercentral groups remains open. It is not even known whether it extends to all countably infinite abelian groups. The next example is one instance where it can be settled by hand.

	\begin{example}[Infinite-dimensional noncommutative tori]\label{ex:infdim}
			Consider the free abelian group of countably infinite rank $G=\bigoplus_{j\geq1}\Z$ with generators $(e_j)_{j\geq1}$. Let $\sigma$ be the antisymmetric two-cocycle determined by
		\[
		\lambda_\sigma(e_j)\lambda_\sigma(e_k)=e^{2\pi i t_{jk}}\,\lambda_\sigma(e_k)\lambda_\sigma(e_j),
		\qquad j<k, \qquad t_{jk}\in[0,1).
		\]
		If at least one $t_{jk}$ is irrational, then $\Cr(G,\sigma)$ is an inductive limit of nonrational noncommutative tori. These are all $\Zst$-stable (see e.g., \cite{EnstadVilalta}), so $\Cr(G,\sigma)$ is $\Zst$-stable by \cite[Corollary~3.4]{TomsWinterSSA}. It need not be simple, and its nuclear dimension may be finite or infinite. If it is simple, that is, if $(G,\sigma)$ satisfies Kleppner's condition, then it is selfless by Corollary~\ref{t:fc} and has nuclear dimension one (see below).

			Now consider instead the cocycle $\sigma_\Sigma$ given by $t_{jk}=\frac{1}{j+k}$. Even though each value of $\sigma_\Sigma$ is a root of unity, $(G,\sigma_\Sigma)$ satisfies Kleppner's condition. Indeed, a nonzero $x=\sum_{j\leq N}a_je_j$ is $\sigma_\Sigma$-regular only if $\sum_{j\leq N}a_j/(j+k)\in\Z$ for all $k>N$. The sum is a rational function of $k$ which tends to $0$ as $k\to\infty$. It is not identically zero, since the functions $(k+j)^{-1}$ for $j\leq N$ are linearly independent. Hence, for all sufficiently large $k$, it lies in $(-1,1)\setminus\{0\}$, and therefore is not an integer. Thus, no nonzero $x$ is $\sigma_\Sigma$-regular, and by Fact~\ref{f:bo} the algebra $\Cr(G,\sigma_\Sigma)$ is simple with a unique tracial state.

		The group $G$ is not finitely generated, so the criterion of \cite{EnstadVilalta} does not apply. By Facts~\ref{f:tw} and~\ref{f:nucdim} and Lemma~\ref{l:k1} below, the algebra $\Cr(G,\sigma_\Sigma)$ is $\Zst$-stable if and only if its nuclear dimension is one. Theorem~\ref{t:ex39} establishes these equivalent properties.
	\end{example}

		The rest of this section proves this claim using approximate divisibility. The point is that, in infinite rank, one can centralize any prescribed finite set and still find a large rotation algebra in the resulting commutant. The next lemma says what a rational rotation algebra contributes to the argument.

	\begin{lemma}\label{l:bundle}
			Let $n\geq2$, let $q\geq n^2+n+1$, and let $p\in\Z$ satisfy $\gcd(p,q)=1$. Then the rational rotation algebra $A_{p/q}$ contains a unital copy of $M_n\oplus M_{n+1}$.
	\end{lemma}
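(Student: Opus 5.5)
Since $A_{p/q}$ is a homogeneous algebra $\cong C(\T^2, \text{bundle of } M_q)$, the plan is to realize $M_n\oplus M_{n+1}$ inside it through a decomposition $q = an + b(n+1)$ with $a,b\geq1$. This arithmetic step is where the bound $q\geq n^2+n+1$ enters. By the Frobenius coin problem for the coprime pair $n,n+1$, every integer $\geq n(n+1)-n-(n+1)+1 = n^2-n$ can be written as $an+b(n+1)$ with $a,b\geq 0$. To force $a,b\geq1$, apply this to $q-n-(n+1)=q-2n-1\geq n^2-n$, which holds because $q\geq n^2+n+1$. Thus $q=an+b(n+1)$ with $a,b\geq1$.

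Next I would use the standard structure of $A_{p/q}$. It is isomorphic to the section algebra of a locally trivial $M_q$-bundle $E$ over $\T^2$ with Dixmier--Douady class zero in $H^3(\T^2)=0$. The bundle is nontrivial as a $C^*$-bundle (its $K$-theory differs from that of $C(\T^2,M_q)$), but it is $\mathrm{End}(V)$ for a rank-$q$ complex vector bundle $V$ over $\T^2$. One can also see this via the identification $A_{p/q}\cong \mathrm{End}_{C(\T^2)}$ of a finitely generated projective module, equivalently $\mathrm{End}(V)$ with $c_1(V)$ determined by $p$.

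It then suffices to split $V\cong (W_1\otimes\C^n)\oplus(W_2\otimes\C^{n+1})$ for vector bundles $W_1,W_2$ of ranks $a,b$. Given this, $M_n\oplus M_{n+1}$ embeds unitally into $\mathrm{End}(V)$ via $(x,y)\mapsto (1_{W_1}\otimes x)\oplus(1_{W_2}\otimes y)$. Complex vector bundles over the two-dimensional CW complex $\T^2$ are classified by rank and first Chern class in $H^2(\T^2)\cong\Z$. The Chern class of the right-hand side is $n\,c_1(W_1)+(n+1)c_1(W_2)$, and every integer $c$ has this form because $\gcd(n,n+1)=1$. So choose $W_1,W_2$ with rank $a,b\geq1$ and the needed Chern classes; this is possible because any integer Chern class is realized in every positive rank (take a line bundle plus a trivial bundle). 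Then the two sides have the same rank $q$ and the same $c_1$, hence are isomorphic.

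The main obstacle is justifying the bundle picture $A_{p/q}\cong\Gamma(\mathrm{End}(V))$ cleanly, rather than merely as a continuous field. I would cite the classical description of rational rotation algebras (Hoegh-Krohn--Skjelbred, Rieffel, or Disney--Raeburn): they are homogeneous of degree $q$ over $\T^2$, and since $H^3(\T^2;\Z)=0$ the Dixmier--Douady class vanishes, so the algebra is $\mathrm{End}$ of a vector bundle. An alternative that avoids Chern classes is to find orthogonal projections in $A_{p/q}$ onto subbundles and use cancellation. The bundle argument is cleaner, though, and needs only $a,b\geq1$; the precise threshold $n^2+n+1$ is only needed for the coin-problem step.
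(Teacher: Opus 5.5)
Your proposal is correct and follows the paper's proof essentially step for step: the Frobenius bound applied to $q-2n-1$ gives $q=an+b(n+1)$ with $a,b\geq1$, the vanishing of $H^3(\T^2,\Z)$ gives $A_{p/q}\cong\Gamma(\T^2,\operatorname{End}V)$, and the classification of bundles over $\T^2$ by rank and $c_1$ gives the splitting $V\cong(W_1\otimes\C^n)\oplus(W_2\otimes\C^{n+1})$ and hence the block-diagonal unital embedding. One small point: your parenthetical reason for the bundle's nontriviality should refer to the class of the unit in $K_0$, not to the $K$-groups, which are $\Z^2$ in both cases; this aside plays no role in the argument.
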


	\begin{proof}
			The algebra $A_{p/q}$ is homogeneous of degree $q$ over $\T^2$. Since $H^3(\T^2,\Z)=0$, its Dixmier--Douady class vanishes, and hence $A_{p/q}\cong\Gamma(\T^2,\operatorname{End}E)$ for a rank-$q$ complex vector bundle $E$ over $\T^2$; see \cite{DisneyRaeburn}. The Frobenius number of the coprime pair $\{n,n+1\}$ is $n^2-n-1$. Since $q-n-(n+1)>n^2-n-1$, the number $q-n-(n+1)$ is a nonnegative integer combination of $n$ and $n+1$. Hence there are integers $r,s\geq1$ with $nr+(n+1)s=q$. Fix an identification $H^2(\T^2,\Z)\cong\Z$, and let $c\in\Z$ correspond to $c_1(E)$. Choose $a,b\in\Z$ with $na+(n+1)b=c$. Complex vector bundles over a finite two-dimensional CW complex are classified by rank and first Chern class. Hence there are bundles $F$ of rank $r$ and $F'$ of rank $s$ whose first Chern classes correspond to $a$ and $b$, respectively, and
		\[
		E\cong(\C^n\otimes F)\oplus(\C^{n+1}\otimes F'),
		\]
			since both sides have rank $q$ and first Chern class $c_1(E)$. Both summands are nonzero since $r,s\geq1$. The block-diagonal action $(x,y)\mapsto(x\otimes1_F)\oplus(y\otimes1_{F'})$ gives a unital embedding of $M_n\oplus M_{n+1}$ into $\Gamma(\T^2,\operatorname{End}E)$.
	\end{proof}

	\begin{proposition}\label{p:apdiv}
		Let $G$ be a countable abelian group with a two-cocycle $\sigma$ and put $\beta(x,y)=\sigma(x,y)\overline{\sigma(y,x)}$. Assume that for every finitely generated subgroup $H\leq G$ and every $m\in\N$ there are $x,y\in G$ such that
		\begin{itemize}
			\item[(a)] $x$ and $y$ generate a free abelian subgroup of rank two,
			\item[(b)] $\beta(x,h)=\beta(y,h)=1$ for all $h\in H$,
			\item[(c)] $\beta(x,y)$ is a root of unity of order at least $m$.
		\end{itemize}
		Then $\Cr(G,\sigma)$ is approximately divisible, and hence $\Zst$-stable.
	\end{proposition}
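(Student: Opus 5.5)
The idea is to produce, for each $n\geq2$, a sequence of unital embeddings of $M_n\oplus M_{n+1}$ into $\Cr(G,\sigma)$ that asymptotically commute with every element. Each embedding will come from a rational rotation algebra generated by $\lambda_\sigma(x)$ and $\lambda_\sigma(y)$ for suitable $x,y$, fed into Lemma~\ref{l:bundle}.

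Since $G$ is countable, write it as an increasing union of finitely generated subgroups $H_1\leq H_2\leq\cdots$. The unitaries $\lambda_\sigma(h)$, $h\in\bigcup_k H_k$, span a dense $*$-subalgebra. By a standard $\varepsilon/3$ argument it therefore suffices that, for each fixed $k_0$ and each $d$, the commutators $\|[\mu_k(d),\lambda_\sigma(h)]\|$ vanish for all $h\in H_{k_0}$ once $k\geq k_0$. Note that $\mu_k(d)$ has norm at most $\|d\|$, uniformly in $k$.

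Fix $n\geq 2$ and $k$, and apply the hypothesis with $H=H_k$ and $m=n^2+n+1$. This gives $x,y$ satisfying (a)--(c), say with $\beta(x,y)=e^{2\pi i p/q}$, $\gcd(p,q)=1$ and $q\geq n^2+n+1$. Let $C$ be the $C^*$-subalgebra generated by $u=\lambda_\sigma(x)$ and $v=\lambda_\sigma(y)$.

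We first check that $C\cong A_{p/q}$. Since $K=\langle x,y\rangle\cong\Z^2$ by (a), and the restriction $\Cr(K,\sigma|_K)\to\Cr(G,\sigma)$ is a unital embedding, $C\cong\Cr(\Z^2,\sigma|_K)$. A two-cocycle on $\Z^2$ is determined up to cohomology by its commutator bicharacter, and $uv=\beta(x,y)vu$. After rescaling the generators, $C$ is therefore the rotation algebra $A_{p/q}$; the reduced and universal algebras agree since $\Z^2$ is amenable.

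Next, $C$ commutes with $\lambda_\sigma(h)$ for every $h\in H_k$. Indeed, $\lambda_\sigma(x)\lambda_\sigma(h)=\beta(x,h)\lambda_\sigma(h)\lambda_\sigma(x)$, which by (b) reduces to commutation, and likewise for $y$.

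Lemma~\ref{l:bundle} then provides a unital embedding $\mu_k\colon M_n\oplus M_{n+1}\to C\subseteq\Cr(G,\sigma)$. For $k\geq k_0$ we have $H_{k_0}\subseteq H_k$, so $[\mu_k(d),\lambda_\sigma(h)]=0$ exactly for all $h\in H_{k_0}$. This gives approximate divisibility, and $\Zst$-stability follows from \cite[Theorem~2.3]{TomsWinterASH}, since $\Cr(G,\sigma)$ is separable.

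The main point requiring care is the identification of $C$ with $A_{p/q}$, specifically that the rank-two freeness in (a) is really needed. It guarantees that $C$ is the full rotation algebra rather than a proper quotient. A quotient such as $M_q$ would fail, since $q$ need not be a sum $nr+(n+1)s$ compatible with a unital embedding.

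This identification holds because the trace restricted to $C$ is the canonical faithful trace on $\Cr(K,\sigma|_K)$, with $\tau(u^av^b)=0$ unless $a=b=0$. The cocycle normalization is routine: any cocycle on $\Z^2$ is cohomologous to $\sigma_\theta$ with $e^{2\pi i\theta}=\beta(x,y)$, and cohomologous cocycles give isomorphic reduced twisted algebras.
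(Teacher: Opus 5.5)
Your proposal is correct and follows the paper's proof essentially step for step. You exhaust $G$ by finitely generated $H_k$, apply the hypothesis with $m=n^2+n+1$, and identify $\Cr(\langle x,y\rangle,\sigma)$ with $A_{p/q}$ via the commutator bicharacter. You then use (b) to place it in the commutant of $\Cr(H_k,\sigma|_{H_k})$, embed $M_n\oplus M_{n+1}$ by Lemma~\ref{l:bundle}, and conclude by density and \cite[Theorem~2.3]{TomsWinterASH}.

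Your aside about $M_q$ is wrong, though it does not affect the proof. Since $q\geq n^2+n+1$, the Frobenius argument in Lemma~\ref{l:bundle} gives $q=nr+(n+1)s$ with $r,s\geq1$, so $M_q$ would in fact contain a unital copy of $M_n\oplus M_{n+1}$. That example therefore does not illustrate why the freeness in (a) matters.
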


	\begin{proof}
			Write $A=\Cr(G,\sigma)$, which is separable, and fix $n\geq2$. Choose finitely generated subgroups $H_1\subseteq H_2\subseteq\cdots$ with union $G$. Apply the hypothesis to $H_k$ and $m=n^2+n+1$, and let $x_k,y_k$ be the resulting elements. Put $L_k=\Z x_k\oplus\Z y_k$ and write $\beta(x_k,y_k)=e^{2\pi ip_k/q_k}$ with $p_k$ coprime to $q_k$ and $q_k\geq m$. By (a), the group $L_k$ is free abelian of rank two. The cohomology class of a two-cocycle on $\Z^2$ is determined by the value of its commutator bicharacter on a basis; see \cite[Lemma~7.2]{Kleppner65}. Thus $\Cr(L_k,\sigma|_{L_k})\cong A_{p_k/q_k}$. The canonical copy of $\Cr(L_k,\sigma|_{L_k})$ lies in $A$, and by (b) it commutes with $\Cr(H_k,\sigma|_{H_k})$. Lemma~\ref{l:bundle} now provides a unital embedding
		\[
		\mu_k\colon M_n\oplus M_{n+1}\longrightarrow \Cr(L_k,\sigma|_{L_k})\subseteq A\cap\Cr(H_k,\sigma|_{H_k})'.
		\]
			Fix $d\in M_n\oplus M_{n+1}$, $a\in A$, and $\varepsilon>0$. The union of the subalgebras $\Cr(H_k,\sigma|_{H_k})$ is dense in $A$, so there exist $k_0$ and $b\in\Cr(H_{k_0},\sigma|_{H_{k_0}})$ such that $\|a-b\|<\varepsilon$. For $k\geq k_0$ we get $[\mu_k(d),b]=0$ and therefore $\|[\mu_k(d),a]\|\leq2\varepsilon\|d\|$. Thus $(\mu_k)_k$ is approximately central and $A$ is approximately divisible, hence $\Zst$-stable.
	\end{proof}

	Before applying the criterion we record an elementary $K_1$-obstruction to being AF.

	\begin{lemma}\label{l:k1}
			Let $G$ be a nontrivial countable torsion-free abelian group with a two-cocycle $\sigma$. Then $K_1(\Cr(G,\sigma))\neq0$. In particular, $\Cr(G,\sigma)$ is not an AF algebra.
	\end{lemma}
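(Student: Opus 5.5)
Since $G$ is nontrivial and torsion-free, it contains a copy of $\Z$: pick any $g\neq e$; then $\Z g\cong\Z$. The plan is to use the unitary $u=\lambda_\sigma(g)\in\Cr(G,\sigma)$ and show that its class $[u]\in K_1(\Cr(G,\sigma))$ is nonzero. Since $AF$ algebras have vanishing $K_1$, the second claim follows at once.

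To detect $[u]$, I would reduce to the finitely generated case. Write $G$ as the increasing union of its finitely generated subgroups containing $g$. Each such subgroup $H$ is finitely generated torsion-free abelian, hence $H\cong\Z^d$. Then $\Cr(G,\sigma)=\varinjlim\Cr(H,\sigma|_H)$ with injective connecting maps, and by continuity of $K$-theory
\[
K_1(\Cr(G,\sigma))\cong\varinjlim K_1(\Cr(H,\sigma|_H)).
\]
So it suffices to show that $[u]$ survives in every stage. It is nonzero in each $K_1(\Cr(H,\sigma|_H))$, and the connecting maps send the class of the unitary to the class of the same unitary.

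For the detection at each stage, I would choose a basis of $H\cong\Z^d$ adapted to $g$. Write $g=k g_0$ with $g_0$ primitive in $H$, and extend $g_0$ to a basis $g_0=f_1,f_2,\dots,f_d$. Then $\Cr(H,\sigma|_H)$ is a $d$-dimensional noncommutative torus, up to cocycle cohomology, with unitary generators $v_i=\lambda_\sigma(f_i)$, and $u$ is a scalar multiple of $v_1^k$. By the Pimsner--Voiculescu / Elliott computation, $K_1(A_\Theta)\cong\Lambda^{\mathrm{odd}}\Z^d$, and the classes $[v_i]$ map to the basis vectors $f_i\in\Lambda^1\Z^d$. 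This is most cleanly seen by iterating the Pimsner--Voiculescu sequence for the crossed products by $\Z$, in which the $K_1$-class of the implementing unitary injects. Then $[u]=k[v_1]\neq0$, since $\Lambda^1\Z^d$ is free and $k\neq0$.

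The main obstacle is making the injectivity of $[v_1]\mapsto$ basis vector rigorous with minimal machinery. A cleaner route I would try first is a rotation-invariant trace/determinant-type argument. Consider the gauge action of the dual torus, or more simply the restriction to a subgroup: $\Cr(\Z g_0,\sigma)\cong C(\T)$. However, this embedding alone does not show nontriviality in the big algebra. So I would instead use the Pimsner--Voiculescu exact sequence for $\Cr(H,\sigma|_H)\cong\Cr(H',\sigma|_{H'})\rtimes_{\alpha}\Z$. Here $\Z$ is generated by $f_1$ (reordering so $f_1$ acts last), and the boundary map $K_1(A\rtimes\Z)\to K_0(A)$ sends $[v_1]$ to $-[1]\neq0$, since $[1]\neq0$ in $K_0$ of a unital algebra with a faithful trace taking value $1$. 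Hence $[v_1^k]\mapsto -k[1]$, which is nonzero because the trace gives $-k$. Compatibility of the boundary maps with the inclusions $H\subseteq H''$ is not even needed: it suffices to apply this argument directly to $G$ itself. I would write $G\supseteq K$ with $G/K$ infinite cyclic via a homomorphism $G\to\Z$ sending $g\mapsto k\neq0$. Such a homomorphism exists after replacing $G$ by the finitely generated $H$, and at the level of $H$ this suffices by the limit argument above. This requires the standard fact that $\Cr(H,\sigma)\cong\Cr(K,\sigma|_K)\rtimes_{\alpha,\omega}\Z$, and a twisted crossed product by $\Z$ is untwisted.
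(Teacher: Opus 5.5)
Your proposal is correct, and its skeleton is the paper's. You fix $g\neq e$ and pass to the directed family of finitely generated (hence free abelian) subgroups $L\ni g$. You show that $[\lambda_\sigma(g)]\neq0$ in each $K_1(\Cr(L,\sigma|_L))$, and you conclude by continuity of $K_1$, since the connecting maps send the class of the unitary to the class of the same unitary. Your first detection method, Elliott's identification $K_*(\Cr(L,\sigma|_L))\cong\bigwedge L$ with $[\lambda_\sigma(h)]\mapsto h$, is exactly what the paper uses.

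The Pimsner--Voiculescu argument you then settle on is a genuinely different and more elementary key step. Write $g=kf_1$ and $L=L'\oplus\Z f_1$. Then $\Cr(L,\sigma|_L)\cong\Cr(L',\sigma|_{L'})\rtimes\Z$ with implementing unitary $\lambda_\sigma(f_1)$. The index map sends the class of this unitary to $\pm[1]$, by naturality from $\C\rtimes\Z=C(\T)$. The canonical trace then gives $\tau_*(\partial[\lambda_\sigma(g)])=\pm k\neq0$. This needs only the PV sequence and some tracial state; faithfulness plays no role. It bypasses Elliott's theorem, together with the path-connectedness of the commutator character that the paper checks in order to apply it. What it does not give is the full graded computation of $K_*$, which is not needed here.

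One sentence should go. You cannot ``apply this argument directly to $G$ itself'', because a torsion-free abelian group need not admit any nonzero homomorphism to $\Z$; for instance $\operatorname{Hom}(\mathbb{Q},\Z)=0$. So the stagewise argument followed by the limit is genuinely required, and that is what you in fact fall back on.
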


	\begin{proof}
		Fix $0\neq g\in G$, and let $\mathcal L_g$ be the directed family of finitely generated subgroups $L\leq G$ containing $g$. For $L\in\mathcal L_g$, let
		\[
			\rho_L(x\wedge y)=\sigma(x,y)\overline{\sigma(y,x)}
			\qquad (x,y\in L)
		\]
			be the commutator character of $\sigma|_L$. Since $L$ is finitely generated and torsion-free, it is free abelian, and $\widehat{L\wedge L}$ is a torus. Thus $\rho_L$ lies in the path component of the trivial character. Since $L$ is amenable, its full and reduced twisted group $C^*$-algebras agree. Elliott's computation \cite[Theorem~2.2]{ElliottKTheory} gives an isomorphism of graded groups
		\[
			K_*(\Cr(L,\sigma|_L))\cong\bigwedge L
		\]
			under which $[\lambda_\sigma(h)]\in K_1(\Cr(L,\sigma|_L))$ corresponds to $h\in\bigwedge^1L$ for every $h\in L$. As $L$ ranges over $\mathcal L_g$, these algebras form a directed system with inductive limit $\Cr(G,\sigma)$. Every connecting map carries $[\lambda_\sigma(g)]$ to the corresponding nonzero class in the larger subgroup. Thus this class never vanishes at a later stage, and continuity of $K$-theory gives $[\lambda_\sigma(g)]\neq0$ in $K_1(\Cr(G,\sigma))$. Hence $K_1(\Cr(G,\sigma))\neq0$, whereas every AF algebra has trivial $K_1$.
	\end{proof}

	\begin{theorem}\label{t:ex39}
		Let $G=\bigoplus_{j\geq1}\Z$ with the two-cocycle $\sigma_\Sigma$ of Example~\ref{ex:infdim} determined by $t_{jk}=\frac1{j+k}$ for $j<k$. Then $\Cr(G,\sigma_\Sigma)$ is $\Zst$-stable and selfless, and has nuclear dimension one.
	\end{theorem}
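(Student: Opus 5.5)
The plan is to verify the hypothesis of Proposition~\ref{p:apdiv} for $(G,\sigma_\Sigma)$ and then assemble the conclusions from facts already recorded. Up to the orientation convention, which only replaces $\beta$ by its conjugate and is irrelevant for conditions (b) and (c), the commutator bicharacter on generators is $\beta(e_j,e_k)=e^{2\pi i/(j+k)}$ for $j\neq k$. It extends bimultiplicatively, so $\beta(a e_j, b e_k)=e^{2\pi i ab/(j+k)}$.

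\emph{Step 1: the approximate divisibility hypothesis.} Let $H\leq G$ be finitely generated and $m\in\N$. Choose $N$ with $H\subseteq\bigoplus_{j\leq N}\Z e_j$. It then suffices to find $x,y$ with $\beta(x,e_j)=\beta(y,e_j)=1$ for all $j\leq N$.
\begin{itemize}
\item Choose a prime $P>\max(m,2N+2)$.
\item Put $k=N+1$ and $l=P-N-1$, so that $l>k>N$ and $k+l=P$.
\item Let $L=\operatorname{lcm}\{j+k: j\leq N\}$ and $L'=\operatorname{lcm}\{j+l: j\leq N\}$, and set $x=Le_k$, $y=L'e_l$.
\end{itemize}
Then $\beta(x,e_j)=e^{2\pi iL/(j+k)}=1$ and $\beta(y,e_j)=1$ for $j\leq N$, which gives (b). Condition (a) holds because $x$ and $y$ are nonzero multiples of distinct basis vectors. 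For (c) we have $\beta(x,y)=e^{2\pi iLL'/P}$. The numbers $j+k\leq 2N+1$ and $j+l\in\{P-N,\dots,P-1\}$ are all smaller than the prime $P$, so $P\nmid LL'$. Hence $\beta(x,y)$ is a primitive $P$-th root of unity, of order $P\geq m$. This number-theoretic choice is the only real content: one must kill the twist against the finitely many $e_j$ while keeping a large, controlled order for $\beta(x,y)$. Taking $k+l$ prime and larger than every denominator is what makes this work.

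\emph{Step 2: $\Zst$-stability.} Proposition~\ref{p:apdiv} now shows that $\Cr(G,\sigma_\Sigma)$ is approximately divisible, hence $\Zst$-stable.

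\emph{Step 3: selflessness.} By Example~\ref{ex:infdim}, $(G,\sigma_\Sigma)$ satisfies Kleppner's condition. Since $G$ is abelian, hence FC-hypercentral, Corollary~\ref{t:fc} (ii)$\Rightarrow$(i) gives selflessness.

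\emph{Step 4: nuclear dimension.} The algebra is simple, separable, unital, nuclear and infinite-dimensional, so Fact~\ref{f:tw} gives finite nuclear dimension. Fact~\ref{f:nucdim} then puts the value in $\{0,1\}$, with $0$ only for AF algebras. Since $G$ is nontrivial and torsion-free, Lemma~\ref{l:k1} excludes the AF case, so the nuclear dimension is one.
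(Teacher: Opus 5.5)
Your proof is correct and follows the paper's argument essentially verbatim: the same prime $P>\max(m,2N+2)$, the same indices $N+1$ and $P-N-1$, and the same assembly via Proposition~\ref{p:apdiv}, Corollary~\ref{t:fc}, Lemma~\ref{l:k1} and Fact~\ref{f:nucdim}. Replacing the paper's products by lcm's and omitting its normalizing factor $c$ is a harmless simplification, since only the order of $\beta(x,y)$ matters. One small slip: $\beta(e_j,e_k)=e^{2\pi i/(j+k)}$ holds only for $j<k$, and for $j>k$ it is the conjugate; as you note, this does not affect (b) or (c).
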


	\begin{proof}
		Here $\beta(e_j,e_k)=e^{2\pi i/(j+k)}$ for $j<k$. Every finitely generated subgroup of $G$ lies in some $V_N=\langle e_1,\dots,e_N\rangle$, so it suffices to verify the hypothesis of Proposition~\ref{p:apdiv} for $H=V_N$. Let $m\in\N$ and choose a prime $q>\max\{2N+2,m\}$. Put
		\[
		j=N+1,\qquad k=q-N-1,
		\]
		so that $N<j<k$ and $j+k=q$, and put
		\[
		P_j=\prod_{i=1}^N(j+i),\qquad P_k=\prod_{i=1}^N(k+i).
		\]
		The factors of $P_j$ lie in $\{N+2,\dots,2N+1\}$ and the factors of $P_k$ lie in $\{q-N,\dots,q-1\}$. All of them are at least $2$ and smaller than $q$, so the prime $q$ does not divide $P_jP_k$. Choose $c\in\Z$ with $cP_jP_k\equiv1\pmod q$ and put
		\[
		x=cP_je_j,\qquad y=P_ke_k.
		\]
			Since $j\neq k$ and $c\neq0$, the elements $x$ and $y$ generate a free abelian group of rank two, verifying (a). For $i\leq N$ the integer $i+j$ divides $P_j$ and the integer $i+k$ divides $P_k$, so
		\[
		\beta(e_i,x)=e^{2\pi icP_j/(i+j)}=1,
		\qquad
		\beta(e_i,y)=e^{2\pi iP_k/(i+k)}=1,
		\]
			and (b) follows by bilinearity. Since $j+k=q$, we have
		\[
		\beta(x,y)=e^{2\pi icP_jP_k/q}=e^{2\pi i/q},
		\]
			a primitive $q$-th root of unity with $q>m$, which verifies (c). Finally, Proposition~\ref{p:apdiv} shows that $\Cr(G,\sigma_\Sigma)$ is $\Zst$-stable.

			By Example~\ref{ex:infdim}, the pair $(G,\sigma_\Sigma)$ satisfies Kleppner's condition, so $\Cr(G,\sigma_\Sigma)$ is selfless by Corollary~\ref{t:fc}. It has finite nuclear dimension by Fact~\ref{f:tw}, and it is not AF by Lemma~\ref{l:k1}, so its nuclear dimension is exactly one by Fact~\ref{f:nucdim}.
	\end{proof}
	\begin{remark}\label{r:rational}
			Every value of the cocycle in Theorem~\ref{t:ex39} is a root of unity. For nontrivial finitely generated torsion-free abelian groups, this is well known to be incompatible with Kleppner's condition. The infinite rank of $G$ in Theorem~\ref{t:ex39} allows one to choose $x$ and $y$ outside any prescribed finitely generated subgroup, and it lets the order of $\beta(x,y)$ be unbounded. That a root-of-unity-valued cocycle could satisfy Kleppner's condition when $G$ is not finitely generated was also observed in \cite[Example~5.9]{BedosOmlandST}.
	\end{remark}

	\section{\texorpdfstring{$\Zst$}{Z}-stable inclusions}\label{sec:zstincl}

		Throughout the rest of the paper, we assume $H\trianglelefteq G$, write $\sigma'=\sigma|_{H\times H}$, and equip the algebras with their canonical traces. The canonical inclusion $\Cr(H,\sigma')\subseteq \Cr(G,\sigma)$ is unital and trace-preserving. It admits a faithful trace-preserving conditional expectation $\Cr(G,\sigma)\to\Cr(H,\sigma')$. Moreover, with $K=G/H$, there is an isomorphism $\Cr(G,\sigma)\cong \Cr(H,\sigma')\rtimes_{r,(\alpha,u)} K$, where $(\alpha,u)$ is the canonical twisted action of $K$ on $\Cr(H,\sigma')$; see \cite{Bedos,BedosOmlandST}.

	\begin{definition}[{\cite{Sarkowicz}}]\label{d:zstincl}
		A unital inclusion $B\subseteq A$ of separable $C^*$-algebras is a \emph{$\Zst$-stable inclusion} if there is an isomorphism $A\to A\otimes\Zst$ that restricts to an isomorphism $B\to B\otimes\Zst$.
	\end{definition}

	\begin{fact}[Sarkowicz {\cite[Theorem~4.4]{Sarkowicz}}]\label{f:sark}
			Let $B\subseteq A$ be a unital inclusion of separable $C^*$-algebras. Then $B\subseteq A$ is a $\Zst$-stable inclusion if and only if there is a unital embedding $\Zst\hookrightarrow B^{\omega}\cap A'$, where the relative commutant is taken in $A^{\omega}$, $B^{\omega}$ is identified with its coordinatewise image in $A^{\omega}$, and $A$ is embedded as the constant sequences.
	\end{fact}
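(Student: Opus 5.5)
The plan is to prove the two implications separately, using only standard properties of the Jiang--Su algebra. These are that $\Zst$ is strongly self-absorbing, that $\Zst\cong\Zst^{\otimes\infty}$, and that $\Zst$ has approximately inner half-flip. The point of the whole argument is that every unitary and every approximately central sequence involved comes from the $\Zst$-tensor factor. Such elements automatically lie in the smaller algebra $B\otimes\Zst$, and not merely in $A\otimes\Zst$.

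\emph{Forward direction.} Suppose $\varphi\colon A\to A\otimes\Zst$ is an isomorphism with $\varphi(B)=B\otimes\Zst$.
\begin{enumerate}
\item Compose $\varphi$ with an isomorphism $\Zst\cong\Zst^{\otimes\infty}$. This gives an isomorphism $\Phi\colon A\to A\otimes\Zst^{\otimes\infty}$ with $\Phi(B)=B\otimes\Zst^{\otimes\infty}$.
\item Let $\iota_k\colon\Zst\to\Zst^{\otimes\infty}$ be the embedding into the $k$-th tensor slot. Put $\psi_k(z)=\Phi^{-1}(1\otimes\iota_k(z))$. Each $\psi_k$ is a unital $*$-homomorphism $\Zst\to B$, since $1\otimes\iota_k(z)\in B\otimes\Zst^{\otimes\infty}$.
\item For $a\in A$, approximate $\Phi(a)$ by an elementary tensor supported on the first $N$ slots. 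For $k>N$, $\psi_k(z)$ then commutes with $a$ up to $\varepsilon$, so $\|[\psi_k(z),a]\|\to0$.
\end{enumerate}
Hence $(\psi_k)_k$ induces a unital embedding $\Zst\hookrightarrow B^\omega\cap A'$.

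\emph{Converse.} Let $\pi\colon\Zst\to B^\omega\cap A'$ be a unital embedding.
\begin{enumerate}
\item Since $\pi(\Zst)$ commutes with $A$, there is a unital $*$-homomorphism $\Psi\colon A\otimes\Zst\to A^\omega$ with $\Psi(a\otimes z)=a\,\pi(z)$. It maps $B\otimes\Zst$ into $B^\omega$ and satisfies $\Psi(a\otimes1)=a$.
\item Use the approximately inner half-flip of $\Zst$, whose implementing unitaries lie in $\Zst\otimes\Zst$. Together with a standard reindexation (Kirchberg's $\varepsilon$-test, using separability), this shows that $\mathrm{id}_A\otimes1_\Zst\colon A\to A\otimes\Zst$ is approximately unitarily equivalent to a map whose image lies in a sequence-algebra copy of $A\otimes\Zst$. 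The implementing unitaries lie in $1_A\otimes\Zst\otimes\Zst\subseteq B\otimes\Zst\otimes\Zst$. This is the usual proof that $D$-absorption is equivalent to the existence of a unital embedding $D\to A_\omega\cap A'$, for strongly self-absorbing $D$.
\item Run Elliott's one-sided (approximate) intertwining argument for $A\to A\otimes\Zst$. All correcting unitaries are chosen from $\pi(\Zst)$-coordinates, which lie in $B$, or from $1\otimes\Zst$. The limit isomorphism $A\to A\otimes\Zst$ then maps $B$ onto $B\otimes\Zst$.
\item Surjectivity onto $B\otimes\Zst$ comes from the same intertwining, which is carried out simultaneously at the level of $B$.
\end{enumerate}

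\emph{Main obstacle.} The main difficulty is this bookkeeping in the converse. One must run the intertwining for $A$ and $B$ at the same time and verify that every unitary used belongs to the unitization of $B\otimes\Zst$ (or of $B$). I would handle this by insisting from the outset that all approximating unitaries come only from the $\Zst$-part, namely from $\pi(\Zst)\subseteq B^\omega$ and from the half-flip unitaries in $\Zst\otimes\Zst$, and never from $A$ itself.
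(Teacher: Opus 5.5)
The paper gives no proof of this statement and simply imports it from Sarkowicz's Theorem~4.4. Your argument is the standard McDuff-type proof underlying that result, and it is correct: central sequences from the $\Zst^{\otimes\infty}$ factor for the forward direction, and for the converse Elliott's one-sided intertwining for $\mathrm{id}_A\otimes 1_{\Zst}$, run simultaneously for $A$ and $B$ with every unitary in $B\otimes\Zst$.

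The only thing to tidy is Step~2 of the converse, where the description of the unitaries is off. The unitaries that do the work are unitary lifts $v_n\in B\otimes\Zst$ of $(\pi\otimes\mathrm{id}_{\Zst})(w)\in B^{\omega}\otimes\Zst\subseteq(B\otimes\Zst)^{\omega}$, where $w\in\Zst\otimes\Zst$ is a half-flip unitary with $w^*(1\otimes d)w\approx d\otimes 1$ on a finite set. So their first leg comes from $\pi(\Zst)$, not from $1_A$ as in your ``$1_A\otimes\Zst\otimes\Zst$''. They commute asymptotically with $A\otimes 1$ and move $a\otimes d$ (resp.\ $b\otimes d$) close to $A\otimes 1$ (resp.\ $B\otimes 1$), which is exactly the input the simultaneous intertwining needs, and no $\varepsilon$-test is required.
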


		The main tool of this section is an equivariant $\Zst$-stability statement for twisted actions in the unique-trace case (Lemma~\ref{f:ghv}). For genuine actions, the unique-tracial-state case was established in \cite{Sato,GHV,Wouters}.

	\begin{lemma}[Equivariant $\Zst$-stability from unique tracial state]\label{f:ghv}
			Let $D$ be a simple, separable, unital, nuclear, infinite-dimensional, $\Zst$-stable $C^*$-algebra with a unique tracial state $\tau$. Let $(\gamma,w)$ be a twisted action of a countable amenable group $K$ on $D$. Since $w(k,\ell)\in\U(D)$, the inner automorphism $\operatorname{Ad}w(k,\ell)$ acts trivially on the central sequence algebra $D^{\omega}\cap D'$, and $(\gamma,w)$ induces a genuine action of $K$ on $D^{\omega}\cap D'$. Then there is a unital embedding $\Zst\hookrightarrow (D^{\omega}\cap D')^{K}$.
	\end{lemma}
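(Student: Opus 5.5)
We plan to reduce the twisted case to the known untwisted results via a stabilization (Packer--Raeburn) trick, and then pass from the stabilized algebra back to $D$ at the level of central sequences. The key observation is that the conclusion only concerns the genuine action $\bar\gamma$ of $K$ on $D_\omega:=D^{\omega}\cap D'$. So it suffices to produce, for each finite $F\subseteq K$, each finite set in a dense subset of $D$, and each $\varepsilon>0$, an approximately central, approximately $F$-invariant unital c.p.c.\ order zero copy of a dimension drop algebra $Z_{n,n+1}$. Since $\Zst$ is an inductive limit of dimension drop algebras and $D_\omega$ with its $K$-action is $\omega$-saturated (countable reindexation), such finite approximations will assemble to a unital embedding $\Zst\hookrightarrow D_\omega^{K}$. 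This is the standard reindexing argument of Kirchberg, as used in \cite{Sato,GHV}.

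Concretely, we apply the Packer--Raeburn stabilization trick. The twisted action $(\gamma,w)$ is exterior equivalent to a genuine action $\tilde\gamma$ of $K$ on $D\otimes\mathcal K$, and the cutdown by $e_{11}$ recovers $D$. The stabilization itself is not unital, so instead we prefer the following unital variant. Since $D$ is $\Zst$-stable with a unique trace and nuclear, the induced action $\bar\gamma$ on $D_\omega$ is itself a genuine action, because $\operatorname{Ad}w(k,\ell)$ is trivial there, as noted in the statement. We would then run the Sato/Matui--Sato argument directly on $D_\omega$. The steps are as follows.

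First, by $\Zst$-stability and Matui--Sato, the trace kernel quotient map $D_\omega\to \mathcal M_\omega\cap\mathcal M'$ is surjective with $\mathcal M=\pi_\tau(D)''\cong R$, the hyperfinite II$_1$ factor. This uses the unique trace. Second, the twisted action extends to a cocycle action on $R$. By Ocneanu's theorem for amenable $K$, the central sequence algebra $R_\omega$ admits, for each $n$, unital copies of $M_n$ fixed by the induced action. Third, we use property (SI) for $D$, which holds by Matui--Sato since $D$ is $\Zst$-stable, nuclear and has a unique trace, together with an equivariant version of it. The equivariant version is obtained by averaging over F{\o}lner sets of $K$ in $D_\omega$, which is possible since the induced action is genuine. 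With these, we lift the fixed matrix units and the fixed order zero maps to $D_\omega$. Fourth, we build a fixed unital copy of $Z_{n,n+1}$ by the Matui--Sato/Sato recipe and reindex as above.

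The main obstacle will be the third step: obtaining fixed elements in $D_\omega$, rather than merely approximately fixed ones, with the required (SI)-type relations. Here the F{\o}lner averaging must be combined with the tracial smallness of the defect elements. My first attempt is to quote Sato's argument \cite{Sato} verbatim. That argument works entirely inside $D_\omega$ and only uses the genuine induced action there, the surjection onto $R_\omega$, and Ocneanu's cocycle-action result on $R$. It therefore applies unchanged to twisted actions once we check that the weak closure of the induced action on $R$ is a cocycle action with the same central-sequence action. If that check becomes delicate, the fallback is Szab\'o's framework \cite{Szabo}, which treats cocycle actions directly and gives equivariant $\Zst$-stability of $(D,\gamma,w)$, hence the desired embedding into $D_\omega^K$.
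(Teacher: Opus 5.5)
Your fallback is exactly the paper's proof. The paper checks the hypotheses of \cite[Theorem~5.20]{Szabo} for $D$:
\begin{itemize}
\item separable, simple and nuclear;
\item non-elementary, since it is simple and infinite-dimensional;
\item finite, since it is unital with a faithful trace;
\item very weak comparison, since $\Zst$-stability gives strict comparison;
\item a single ray of extremal traces, since $\tau$ is unique.
\end{itemize}
It then uses \cite[Remark~5.25]{Szabo} to pass from genuine actions to cocycle actions such as $(\gamma,w)$. If you take that route, this hypothesis check is all that needs to be written out.

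Your primary route, however, is not a proof. It has a genuine gap at step three, which is where all the difficulty lies.
\begin{itemize}
\item F{\o}lner averaging does not give the equivariant (SI) relations. Suppose $e,f$ are $\bar\gamma$-fixed and $s^*s=e$, $fs=s$. Then $t=|F|^{-1}\sum_{k\in F}\bar\gamma_k(s)$ still satisfies $ft=t$. But $t^*t=|F|^{-2}\sum_{k,l\in F}\bar\gamma_k(s)^*\bar\gamma_l(s)$ is in general not $e$. Saying the averaging ``must be combined with the tracial smallness of the defect'' does not say how to repair this.
\item The classical way around this is Rokhlin towers from Ocneanu's Rokhlin lemma, as in Matui--Sato. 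That needs (strong) outerness of the induced action on $R$, which is not assumed. Indeed, the lemma is applied in Proposition~\ref{t:amenable-inclusion-selfless} to quotient actions that may be inner, even trivial, on $\pi_\tau(\Cr(H,\sigma'))''$.
\item Equivariant (SI) for arbitrary amenable actions is exactly what \cite{Szabo} provides. So your primary route amounts to re-proving the cited theorem rather than giving an independent argument.
\end{itemize}

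Likewise, your claim that Sato's argument ``applies unchanged'' because it only sees the genuine action on $D^{\omega}\cap D'$ is asserted, not checked. The lifting and reindexing steps run in $D^{\omega}$, where $\gamma$ is only a twisted action. This is repairable, because $w$ takes values in $D$ and the relevant conditions are imposed only on approximately central elements. But that verification is precisely what \cite[Remark~5.25]{Szabo} supplies.

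A minor correction: the induced action on $D^{\omega}\cap D'$ is genuine for any twisted action on any unital $D$, simply because $w(k,\ell)\in D$. This uses neither $\Zst$-stability, nuclearity nor the unique trace. Similarly, the surjection onto $\mathcal M^{\omega}\cap\mathcal M'$ does not rely on $\Zst$-stability.
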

	\begin{proof}
			We verify the hypotheses of \cite[Theorem~5.20]{Szabo} for $D$. The group $K$ is countable amenable. The algebra $D$ is separable, simple, and nuclear. It is non-elementary because it is simple and infinite-dimensional and it is finite because it is unital with a faithful trace. Being $\Zst$-stable, it has strict comparison, hence the very weak comparison required in \cite{Szabo}. The trace $\tau$ is the only tracial state, so $D$ has a single ray of extremal traces. Under these hypotheses, \cite[Theorem~5.20]{Szabo} gives equivariant $\Zst$-stability for every action of $K$ on $D$, and by \cite[Remark~5.25]{Szabo} the same holds for cocycle actions. Applied to $(\gamma,w)$, it yields the unital embedding $\Zst\hookrightarrow (D^{\omega}\cap D')^{K}$.
	\end{proof}

		The next proposition is the main step. It deduces $\Zst$-stability of the inclusion from the unique tracial state and $\Zst$-stability of $\Cr(H,\sigma')$ alone.

	\begin{proposition}[$\Zst$-stability of the inclusion]\label{t:amenable-inclusion-selfless}
		Let $G$ be a countable amenable group with a two-cocycle $\sigma$ and let $H\trianglelefteq G$ be infinite. Suppose that $\Cr(H,\sigma')$ has a unique tracial state and is $\Zst$-stable. Then $\Cr(H,\sigma')\subseteq \Cr(G,\sigma)$ is a $\Zst$-stable inclusion.
	\end{proposition}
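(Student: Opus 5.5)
We will verify the criterion of Fact~\ref{f:sark} for $B=\Cr(H,\sigma')$ and $A=\Cr(G,\sigma)$, that is, we will produce a unital embedding $\Zst\hookrightarrow B^{\omega}\cap A'$. Put $D=B$ and $K=G/H$, and use the identification $A\cong D\rtimes_{r,(\alpha,u)}K$ recalled at the start of this section. The group $K$ is countable and amenable, being a quotient of $G$. The algebra $D$ is separable and unital. It is nuclear because $H$ is amenable. It has a unique tracial state by assumption, so it is simple by Fact~\ref{f:murphy}. It is infinite-dimensional because $H$ is infinite, and it is $\Zst$-stable by assumption. Thus all hypotheses of Lemma~\ref{f:ghv} hold for the twisted action $(\alpha,u)$, and we obtain a unital embedding $\Zst\hookrightarrow(D^{\omega}\cap D')^{K}$, where $K$ acts through the genuine action induced by $(\alpha,u)$.

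It then remains to show that $(D^{\omega}\cap D')^{K}\subseteq B^{\omega}\cap A'$ inside $A^{\omega}$. Since $D^{\omega}=B^{\omega}$ sits coordinatewise in $A^{\omega}$, we only need commutation with $A$. The algebra $A$ is generated by $D$ together with unitaries $\lambda_\sigma(s)$, where $s$ runs over a set of coset representatives for $K$, or simply over all $s\in G$. Elements of $D^{\omega}\cap D'$ already commute with $D$. For $s\in G$ with image $k\in K$, conjugation $\operatorname{Ad}\lambda_\sigma(s)$ restricted to $D$ equals $\alpha_k$ up to an inner automorphism implemented by a unitary of $D$. Different choices of section change $\alpha_k$ only by $\operatorname{Ad}$ of some $\lambda_\sigma(h)$ with $h\in H$, possibly multiplied by scalars. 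On central sequences such inner perturbations act trivially, which is exactly the observation in Lemma~\ref{f:ghv}. Hence for $x=(x_n)\in(D^{\omega}\cap D')^{K}$ we get $\lambda_\sigma(s)x\lambda_\sigma(s)^*=\alpha_k^{\omega}(x)=x$, so $x$ commutes with every $\lambda_\sigma(s)$ and therefore with all of $A$.

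The main care point is this identification. We must check that the genuine action of $K$ on $D^{\omega}\cap D'$ in Lemma~\ref{f:ghv} coincides with the one implemented by $\operatorname{Ad}\lambda_\sigma(s)$ inside $A^{\omega}$, independently of the chosen lifts. This amounts to the explicit description of $(\alpha,u)$ from \cite{Bedos,BedosOmlandST}: $\alpha_k=\operatorname{Ad}\lambda_\sigma(c(k))|_D$ for a section $c\colon K\to G$, with $u(k,l)$ a scalar multiple of $\lambda_\sigma\bigl(c(k)c(l)c(kl)^{-1}\bigr)\in\U(D)$. With this description the verification is direct, and Fact~\ref{f:sark} yields the $\Zst$-stability of the inclusion.
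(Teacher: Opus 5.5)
Your proof is correct and follows the paper's argument. You verify the hypotheses of Lemma~\ref{f:ghv} for the twisted action $(\alpha,u)$ of $K=G/H$ on $\Cr(H,\sigma')$, use that the implementing unitaries act on central sequences by $\alpha_k^{\omega}$ to place $(B^{\omega}\cap B')^{K}$ inside $B^{\omega}\cap A'$, and conclude with Fact~\ref{f:sark}. The differences from the paper are only cosmetic: you get infinite-dimensionality from $H$ being infinite rather than from $\Zst$-stability, you prove only the inclusion that is actually needed where the paper states equality, and you check independence of the chosen lifts more explicitly.
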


	\begin{proof}
			Write $K=G/H$, $A=\Cr(G,\sigma)$, and $B=\Cr(H,\sigma')$, and identify $A\cong B\rtimes_{r,(\alpha,u)}K$, see \cite{Bedos,BedosOmlandST}. Here $K$ is countable amenable as a quotient of $G$, and $(\alpha,u)$ is the canonical twisted action of $K$ on $B$. Since $H$ is countable and amenable, $B$ is unital, separable, and nuclear. By hypothesis, it has a unique tracial state and is $\Zst$-stable. Its $\Zst$-stability implies that it is infinite-dimensional, and Fact~\ref{f:murphy} shows that it is simple.

			By Fact~\ref{f:sark}, it suffices to produce a unital embedding $\Zst\hookrightarrow B^{\omega}\cap A'$. The algebra $A$ is generated by $B$ and the implementing unitaries $\{\lambda_k\}_{k\in K}$. Thus an element $x\in B^{\omega}$ commutes with $A$ if and only if it commutes with $B$ and with each $\lambda_k$. Since $\lambda_k x\lambda_k^*=\alpha_k^{\omega}(x)$, this means $x\in B^{\omega}\cap B'$ and $\alpha_k^{\omega}(x)=x$ for all $k$. The cocycle $u(k,\ell)\in \U(B)$ adds no condition because $x$ commutes with $B$. Thus $k\mapsto\alpha_k^{\omega}$ is a genuine action of $K$ on $B^{\omega}\cap B'$, and
		\[
		B^{\omega}\cap A'=(B^{\omega}\cap B')^{K}.
		\]
		Lemma~\ref{f:ghv} applies to $(\alpha,u)$ on $B$ and gives a unital embedding $\Zst\hookrightarrow (B^{\omega}\cap B')^{K}=B^{\omega}\cap A'$. By Fact~\ref{f:sark}, $B\subseteq A$ is a $\Zst$-stable inclusion.
	\end{proof}

	No outerness of the quotient action is needed for Proposition~\ref{t:amenable-inclusion-selfless}. The unique tracial state of $\Cr(H,\sigma')$ makes the trace simplex a point and removes the orbit conditions in Lemma~\ref{f:ghv}.

	\begin{remark}[Regular inclusions]\label{r:regular}
			An element $b\in A$ normalizes a unital $C^*$-subalgebra $B\subseteq A$ if $bBb^*\subseteq B$ and $b^*Bb\subseteq B$. The inclusion $B\subseteq A$ is \emph{regular} if the normalizers generate $A$; see, e.g., \cite{KwasniewskiMeyer}. Normality of $H$ makes the inclusion $\Cr(H,\sigma')\subseteq\Cr(G,\sigma)$ regular. Indeed, $\lambda_\sigma(g)\lambda_\sigma(h)\lambda_\sigma(g)^*$ is a scalar multiple of $\lambda_\sigma(ghg^{-1})$, so $\lambda_\sigma(g)$ normalizes $\Cr(H,\sigma')$ whenever $gHg^{-1}=H$, and these unitaries generate $\Cr(G,\sigma)$. We do not know whether regularity of the inclusion implies that $H$ must be normal in $G$.
	\end{remark}

	\section{Selfless inclusions}\label{sec:selfincl}

	We now add the dynamical condition that turns the $\Zst$-stable inclusion of Section~\ref{sec:zstincl} into a selfless inclusion, that we call the relative Kleppner condition. We first recall the notions involved.

	\begin{definition}[Hayes--Kunnawalkam Elayavalli--Patchell--Robert {\cite{HKEPR}}]\label{d:selfincl}
			Let $(A,\tau)$ be a tracial $C^*$-probability space, and let $B\subseteq A$ be a unital $C^*$-subalgebra, regarded as a tracial $C^*$-probability space with $\tau|_B$. The inclusion $B\subseteq(A,\tau)$ is a \emph{selfless inclusion} if for some nontrivial $C^*$-probability space $(C,\kappa)$ the first-factor embedding of inclusions
		\[
		(B\subseteq A)\longrightarrow (B\star C\subseteq A\star C)
		\]
		is existential. Equivalently, for some free ultrafilter $\omega$ there is a trace-preserving embedding $(A,\tau)\star(C,\kappa)\hookrightarrow(A^{\omega},\tau^{\omega})$ restricting to the diagonal embedding on $A$ and carrying $B\star C$ into $B^{\omega}$.
	\end{definition}

	\begin{fact}[Hayes--Kunnawalkam Elayavalli--Patchell--Robert {\cite{HKEPR}}]\label{f:selfincl-conseq}
			If $B\subseteq(A,\tau)$ is a selfless inclusion, then every intermediate $C^*$-subalgebra $B\subseteq D\subseteq A$ is selfless (with $\tau|_D$). In particular, both $B$ and $A$ are selfless, and $B\subseteq A$ is $C^*$-irreducible \cite{RordamIrr}.
	\end{fact}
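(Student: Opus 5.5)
The plan is to reduce everything to the intermediate-subalgebra statement. The two particular assertions then follow from it: selflessness of $B$ and $A$ comes from taking $D=B$ and $D=A$, and $C^*$-irreducibility of $B\subseteq A$ means that every intermediate $C^*$-algebra $B\subseteq D\subseteq A$ is simple, which follows from Fact~\ref{f:rob} once each such $D$ is known to be selfless.

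For the main claim, fix an intermediate $D$ and a nontrivial $(C,\kappa)$ together with a trace-preserving embedding $\Phi\colon(A,\tau)\star(C,\kappa)\hookrightarrow(A^\omega,\tau^\omega)$. By Definition~\ref{d:selfincl}, $\Phi$ is the diagonal embedding on $A$ and carries $B\star C$ into $B^\omega$. Restricting $\Phi$ to $D\star C\subseteq A\star C$ gives a trace-preserving embedding $(D,\tau)\star(C,\kappa)\hookrightarrow A^\omega$. It is diagonal on $D$, and it maps $C$ into $B^\omega\subseteq D^\omega$. Since $\Phi$ is a $*$-homomorphism, the image of $D\star C$ therefore lies in the $C^*$-algebra generated by $D^\omega$ and $\Phi(C)$, which is contained in $D^\omega$. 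Thus the inclusion $(D,\tau)\to(D,\tau)\star(C,\kappa)$ is existential for some nontrivial $C$.

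The remaining step, which I expect to be the main obstacle, is to pass from "existential into $D\star C$ for some nontrivial $C$" to Robert's definition, which uses $C=D$. I would handle this with Robert's characterization: selflessness is equivalent to the first-factor embedding into $(D,\tau)\star(C,\kappa)$ being existential for some nontrivial $C$. The proof of that characterization iterates the embedding. One obtains $D\star C\star C\star\cdots$ existentially, and hence a copy of $C\star C$, and then, by a free-group trick, a Haar unitary free from $D$. Conjugating the diagonal copy of $D$ by that Haar unitary produces a free copy of $D$, via a reindexing (diagonal) argument in the ultrapower. If citing this is acceptable I would do so. Otherwise I would sketch the steps directly: first obtain a Haar unitary $u\in D^\omega$ that is $*$-free from $D$; then note that $uDu^*$ together with $D$ generates a copy of $D\star D$ inside $D^\omega$ in trace-preserving fashion; and finally observe that the first factor is still the diagonal embedding. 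Faithfulness of $\tau|_D$ and $D\neq\C$ are inherited from $A$ and $B$, since $B\subseteq D$ and $B$ is nontrivial.
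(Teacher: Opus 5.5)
Your proposal is correct and is the natural argument behind this fact, which the paper quotes from Hayes--Kunnawalkam Elayavalli--Patchell--Robert without proof: the restricted embedding sends $D\star C$ into $D^{\omega}$ because $C$ lands in $B^{\omega}\subseteq D^{\omega}$, Robert's ``some nontrivial $(C,\kappa)$'' characterization of selflessness then applies to $D$, and Fact~\ref{f:rob} makes every intermediate algebra simple, which is R{\o}rdam's $C^*$-irreducibility. One caution about your optional direct sketch: two free copies of $C$ need not contain a Haar unitary (for $C=\C^2$ with trace $(p,1-p)$, $p\neq\tfrac12$, the algebra $C\star C$ has a one-dimensional central summand of trace $|1-2p|$), so the Haar unitary must come from sufficiently many free copies rather than from a single pair.
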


	\begin{definition}[{\cite[Definition~3.6]{BedosOmlandIrr}}]\label{d:relK}
		For $g\in G$, write $g^{H}=\{\,hgh^{-1}:h\in H\,\}$ for its $H$-conjugacy class, and call $g$ \emph{$\sigma$-regular relative to $H$} if $\sigma(g,h)=\sigma(h,g)$ for all $h\in H$ with $gh=hg$. One checks that this is a property of $H$-conjugacy classes. The triple $(H\leq G,\sigma)$ satisfies the \emph{relative Kleppner condition} if every nontrivial $H$-conjugacy class in $G$ that is $\sigma$-regular relative to $H$ is infinite. This condition incorporates Kleppner's condition for $(H,\sigma')$ and implies it for $(G,\sigma)$.
	\end{definition}

	We follow the convention of \cite[Definition~3.6]{BedosOmlandIrr}. In \cite[Definition~4.5]{BedosOmlandST} only $H$-conjugacy classes in $G\setminus H$ that are $\sigma$-regular relative to $H$ are required to be infinite, so Kleppner's condition for $(H,\sigma')$ is not built in. The two conventions agree once $(H,\sigma')$ satisfies Kleppner's condition, which holds throughout the results below.

	\begin{fact}[{\cite[Theorem~6.2]{BedosOmlandIrr}}]\label{f:irr}
			The inclusion $\Cr(H,\sigma')\subseteq\Cr(G,\sigma)$ is $C^*$-irreducible if and only if $(H,\sigma')$ is $C^*$-simple and $(H\leq G,\sigma)$ satisfies the relative Kleppner condition. If $(H,\sigma')$ has a unique tracial state, these conditions are also equivalent to the inclusion having the relative Dixmier property. Whenever these equivalent conditions hold, the map $\Gamma\mapsto\Cr(\Gamma,\sigma|_\Gamma)$ is a bijection between the intermediate subgroups $H\leq\Gamma\leq G$ and the intermediate $C^*$-subalgebras of $\Cr(H,\sigma')\subseteq\Cr(G,\sigma)$.
	\end{fact}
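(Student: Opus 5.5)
The plan is to prove the three parts in order: necessity of the two conditions, sufficiency via averaging, and then the Galois-type correspondence. We identify $A=\Cr(G,\sigma)\cong B\rtimes_{r,(\alpha,u)}K$ with $B=\Cr(H,\sigma')$ and $K=G/H$. Recall from \cite{RordamIrr} that $B\subseteq A$ is $C^*$-irreducible when every intermediate $C^*$-algebra is simple. This forces $B$ to be simple, which is the $C^*$-simplicity of $(H,\sigma')$, and it forces the relative commutant $A\cap B'$ to be trivial.

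\emph{Necessity.} For the relative Kleppner condition we argue by contraposition. Suppose $g\neq e$ is $\sigma$-regular relative to $H$ and has a finite $H$-conjugacy class $g^H=\{g_1,\dots,g_n\}$. For $h\in H$ we have $\lambda_\sigma(h)\lambda_\sigma(g_i)\lambda_\sigma(h)^*=c_i(h)\lambda_\sigma(hg_ih^{-1})$ with $c_i(h)\in\T$.
\begin{itemize}
\item Relative regularity says exactly that the stabilizer of $g_i$ in $H$ acts with trivial phase.
\item One can therefore choose consistent phases $\mu_i\in\T$ so that $x=\sum_i\mu_i\lambda_\sigma(g_i)$ commutes with every $\lambda_\sigma(h)$. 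This is the standard Kleppner computation, done with $H$ in place of $G$.
\end{itemize}
The resulting $x$ lies in $A\cap B'$ and is not a scalar, because $g\neq e$ and $\tau(x\lambda_\sigma(g_1)^*)\neq0$. This contradicts irreducibility.

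\emph{Sufficiency.} Assume $B$ is simple and the relative Kleppner condition holds. The aim is the relative Dixmier property: for each $a\in A$, the closed convex hull of $\{vav^*:v\in\U(B)\}$ meets $\C1$. It suffices to treat $a=\lambda_\sigma(g)$ with $g\neq e$ and show that averages over unitaries $\lambda_\sigma(h)$, $h\in H$, can be made small in norm. There are two cases.
\begin{itemize}
\item If $g^H$ is infinite, the conjugates $\lambda_\sigma(hgh^{-1})$ have disjoint supports. Haagerup--Zsid\'o/Powers-type averaging, using amenability so that norms are controlled by $\ell^2$-type estimates on F{\o}lner-like sets, drives their averages to $0$.
\item If $g^H$ is finite, then $g$ is not relatively regular. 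There is then $h\in H$ commuting with $g$ for which conjugation multiplies $\lambda_\sigma(g)$ by a phase $\neq1$, and averaging over powers of $h$ kills $\lambda_\sigma(g)$. This finite-class case is where the assumption on $(H,\sigma')$ enters.
\end{itemize}
When $B$ has a unique tracial state, the averaged limit must be $\tau(a)1$, and the relative Dixmier property gives irreducibility by \cite{RordamIrr}. Conversely, irreducibility combined with a unique trace on $B$ gives the Dixmier property by Dixmier averaging in $B$ itself, after first reducing $a$ modulo $B$ using the argument above. For the general $C^*$-simple case without a unique trace, I would instead use that every intermediate $D$ has an ideal $I$ on which the canonical conditional expectation $E_B$ is faithful. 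Any nonzero ideal of $D$ then meets $B$ nontrivially through the averaging above, which collapses elements onto their $E_B$-part, and simplicity of $B$ finishes. I expect this step, getting norm (not just $\ell^2$) decay of the averages without a unique trace, to be the main obstacle. I would first try Kennedy--Schafhauser style estimates for twisted crossed products.

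\emph{Correspondence.} Given an intermediate $D$, set $\Gamma=\{g:\lambda_\sigma(g)\in D\}$, which is a subgroup containing $H$. The inclusion $\Cr(\Gamma,\sigma|_\Gamma)\subseteq D$ is clear. For the reverse, take $d\in D$ and $g$ with Fourier coefficient $\tau(d\lambda_\sigma(g)^*)\neq0$. Apply the averaging of the sufficiency step to $d\lambda_\sigma(g)^*$ within the $B$-bimodule generated by $D$ to extract a nonzero multiple of $\lambda_\sigma(g)$ lying in the closure of $D$, which is $D$ itself. This gives $g\in\Gamma$. Hence $D\subseteq\Cr(\Gamma,\sigma|_\Gamma)$, using the faithful conditional expectation onto $\Cr(\Gamma,\sigma|_\Gamma)$. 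Injectivity of $\Gamma\mapsto\Cr(\Gamma,\sigma|_\Gamma)$ follows by reading off supports.
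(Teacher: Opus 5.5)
\textbf{Verdict: genuine gap in the sufficiency direction.} Your necessity argument is fine. A nonscalar self-adjoint $x\in B'\cap A$ makes $C^*(B,x)$ a non-simple intermediate algebra, and the Kleppner-type sum over a finite relatively $\sigma$-regular $H$-class produces such an $x$. The paper gives no proof of its own: it quotes B\'edos--Omland, and only alludes to outerness of the quotient action after Proposition~\ref{p:incl}.

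The gap is your infinite-class case. Disjoint supports give only $\ell^2$-orthogonality. That controls $\tau$-norms, not operator norms, and amenability alone does not bridge the two. As written, this step uses no property of $B$ (you place the hypothesis on $(H,\sigma')$ in the finite-class case), and in that generality it is false. Take $\sigma=1$ and $H=G=\Z_2\wr\Z$, which is amenable and ICC, so the relative Kleppner condition holds. Yet every convex combination of the $\lambda(hgh^{-1})$ has norm $1$, because $\Cr(G)$ has a character.

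The bridge exists when $G$ is amenable and $B$ has a unique trace, and that is where the hypothesis really enters.
\begin{itemize}
\item Average along right F{\o}lner sets $F_n\subseteq H$: $\Phi_n(a)=|F_n|^{-1}\sum_{h\in F_n}\lambda_\sigma(h)a\lambda_\sigma(h)^*$.
\item If $\Phi_n(a)$ does not converge to $\tau(a)1$, a weak$^*$ cluster point of suitable states $\phi_n\circ\Phi_n$ is an $\operatorname{Ad}\lambda_\sigma(H)$-invariant, hence $B$-central, state $\chi\neq\tau$.
\item But $\chi|_B$ is a trace, so $\chi|_B=\tau|_B$.
\item Nontrivial finite classes are non-regular by the relative Kleppner condition, so your phase argument gives $\chi(\lambda_\sigma(g))=0$ there.
\item For $g$ with infinite $H$-class, normality gives $\lambda_\sigma(h)\lambda_\sigma(g)\lambda_\sigma(h)^*=b_h\lambda_\sigma(g)$, with $b_h\in B$ a multiple of $\lambda_\sigma(hgh^{-1}g^{-1})$. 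Cauchy--Schwarz with $b=N^{-1}\sum_{i\le N}b_{h_i}$ over distinct conjugates gives $|\chi(\lambda_\sigma(g))|^2\le\tau(bb^*)=1/N$.
\item Hence $\chi=\tau$, a contradiction.
\end{itemize}
This single scheme treats all $a$ at once, which separate averages over powers of different $h$ do not. It yields the relative Dixmier property and $C^*$-irreducibility. It also justifies your Fourier-coefficient extraction, which is legitimate because $\lambda_\sigma(g)^*\lambda_\sigma(h)^*\lambda_\sigma(g)\in B$. This covers every application of the Fact in this paper.

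Beyond that case an idea is missing, in particular for the first assertion with $(H,\sigma')$ merely $C^*$-simple, which is the step you flag yourself. Averaging to scalars cannot succeed there. For $a\in B$ the relative Dixmier property is the Dixmier property of $B$, which forces at most one tracial state.

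The standard route goes through outerness:
\begin{itemize}
\item Use normality to write $A\cong B\rtimes_{r,(\alpha,u)}K$ with $K=G/H$.
\item The relative Kleppner condition is equivalent to $B'\cap A=\C1$. Fourier coefficients of an element of $B'\cap A$ have constant modulus on $H$-classes. They therefore vanish on infinite classes by $\ell^2$-summability, and on non-regular finite classes by the phase argument.
\item This forces every $\alpha_k$ with $k\neq e$ to be outer. If $\alpha_k=\operatorname{Ad}v$ with $v\in\U(B)$ and $g$ lifts $k$, then $v^*\lambda_\sigma(g)$ is a nonscalar element of $B'\cap A$.
\item Kishimoto's theorem for outer (twisted) actions on simple $C^*$-algebras then makes every $B\rtimes_r L\cong\Cr(\Gamma,\sigma|_\Gamma)$ simple.
\item A Galois correspondence of Cameron--Smith type shows that these exhaust the intermediate algebras.
\end{itemize}
Your ideal-based sketch has the right shape. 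But the estimate it needs is Kishimoto's lemma, which compresses off-diagonal terms by positive elements of $B$, not norm decay of unitary averages.
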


	\begin{fact}[Hayes--Kunnawalkam Elayavalli--Patchell--Robert {\cite[Theorem~2.9]{HKEPR}}]\label{f:incl}
		Let $B\subseteq(A,\rho)$ be an inclusion of $C^*$-probability spaces. If $B\subseteq A$ is a $\Zst$-stable inclusion with the relative Dixmier property, $A$ is exact, and $\rho$ is invariant under conjugation by the unitaries of $B$, then $B\subseteq(A,\rho)$ is a selfless inclusion.
	\end{fact}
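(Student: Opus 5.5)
The plan is to follow Ozawa's proof of Fact~\ref{f:oza}, replacing the central sequence algebra by the relative commutant $B^{\omega}\cap A'$ and replacing simplicity with unique trace by the relative Dixmier property. I take $(C,\kappa)=(C(\T),\text{Haar})$. By Definition~\ref{d:selfincl} it then suffices to find a Haar unitary $w\in B^{\omega}$ that is $*$-free from the diagonal copy of $A$ in $(A^{\omega},\rho^{\omega})$. Given such a $w$, the algebraic free product $A\odot C(\T)\to A^{\omega}$, $a\mapsto a$, $z\mapsto w$, is trace-preserving, and $B\star C(\T)$ lands in $B^{\omega}$ because both $B$ and $w$ lie there. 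The remaining issue is that this map should extend isometrically to the reduced free product.

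\textbf{Step 1 (extension to the reduced free product).} The map above is a $*$-homomorphism on the algebraic free product and preserves the free product trace. I want to show that the GNS norm of $\rho^{\omega}$ restricted to the image dominates the reduced free product norm. Exactness of $A$ is used here, as in Ozawa's argument. For exact $A$, the reduced free product norm is computed by the trace: $\|x\|=\lim_p\tau((x^*x)^p)^{1/2p}$, up to faithfulness of the free product trace. A trace-preserving map therefore extends isometrically once the image trace is faithful on the generated algebra, and this can be arranged by passing to the GNS quotient of $A^{\omega}$.

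\textbf{Step 2 (producing freeness via averaging).} Since $\rho$ is $\U(B)$-invariant and the relative Dixmier property holds, for every $a\in A$ the norm closed convex hull of $\{uau^*:u\in\U(B)\}$ meets $\C1$, and the scalar must be $\rho(a)$. Freeness of $w$ from $A$ means vanishing of all alternating moments $\rho^{\omega}(a_1w^{n_1}a_2w^{n_2}\cdots)$ with $\rho(a_i)=0$ and $n_i\neq0$. By a standard diagonal/ultrapower (saturation) argument, it suffices to do the following: for every finite set $F\subseteq\ker\rho$, every $N$ and every $\varepsilon>0$, find a unitary $w\in B$ whose first $N$ moments are within $\varepsilon$ of Haar and whose alternating moments against $F$ of length at most $N$ are at most $\varepsilon$. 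The unitary $w$ should be built as a product $w=v\,z$. Here $z$ comes from the unital embedding $\Zst\hookrightarrow B^{\omega}\cap A'$ of Fact~\ref{f:sark}, and is chosen as a unitary in $\Zst$ with approximately Haar spectral distribution (possible since $\Zst$ has a diffuse trace). The unitary $v$ is a ``random'' unitary in $B$ obtained by Dixmier averaging. I would imitate Ozawa's scheme: choose $v$ from a convex combination $\sum_i t_iu_iau_i^*\approx\rho(a)$, and use the $A$-centrality of $z$ to decouple the Haar part from the averaging part, so that the mixed moments collapse to products of averaged terms, which are close to $0$.

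\textbf{Main obstacle.} The hard step is Step 2, namely converting convex averaging, which is linear, into control of higher-order alternating moments, which are multilinear in $w$. In Ozawa's setting this is done by a probabilistic selection: the convex weights are viewed as a distribution, independent copies are chosen using the $\Zst$-room (tensor-independent coordinates in $B^{\omega}\cap A'$), and concentration then makes cross terms vanish. I would first try to transplant that argument verbatim, checking that every unitary used lies in $B$ or in $B^{\omega}\cap A'$, so that $w\in B^{\omega}$. The relative Dixmier property supplies averaging unitaries from $B$, and $\U(B)$-invariance of $\rho$ guarantees that these averages converge to $\rho(a)$ rather than to some other scalar.
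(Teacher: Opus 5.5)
The paper does not prove this statement. It quotes it from \cite[Theorem~2.9]{HKEPR} and uses it as a black box in Proposition~\ref{p:incl}, so there is no internal argument to compare with. Judged on its own, your proposal has two genuine gaps, and between them they cover the whole content of the theorem.

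\textbf{Norm control (Step~1).} Suppose $w\in B^{\omega}$ is a Haar unitary that is $*$-free from $A$ in distribution. Then the GNS representation of $\rho^{\omega}$ maps $C^*(A,w)\subseteq A^{\omega}$ \emph{onto} $A\star C(\T)$. You need a map in the opposite direction. That is, the map $\pi$ on the algebraic free product must satisfy $\|\pi(x)\|_{A^{\omega}}\le\|x\|_{A\star C(\T)}$; for a trace this is equivalent to faithfulness of $\rho^{\omega}$ on $C^*(A,w)$. Freeness in distribution does not give this. For example, the canonical generators $s,t$ of the full group $C^*$-algebra $C^*(\mathbb{F}_2)$ are $*$-free Haar unitaries for the trace pulled back from $\Cr(\mathbb{F}_2)$. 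Yet $\|s+s^*+t+t^*\|=4$ there, against $2\sqrt3$ in $\Cr(\mathbb{F}_2)$. Passing to the GNS quotient of $A^{\omega}$ is not allowed. Definition~\ref{d:selfincl} demands an embedding into the norm ultrapower $A^{\omega}$ that carries $B\star C$ into $B^{\omega}$. The quotient by the trace-kernel ideal lives in the tracial (von Neumann) world, where the analogous property carries none of the $C^*$-consequences of selflessness; compare the remark on $W^*(G,\sigma)$ in the introduction. Also, $\|x\|=\lim_p\tau((x^*x)^p)^{1/2p}$ holds on the reduced free product simply because the free product trace is faithful; exactness plays no role there. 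So your proposal never uses exactness in an essential way, and norm control inside $A^{\omega}$ is left entirely unaddressed.

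\textbf{Freeness (Step~2).} The ansatz $w=vz$, with $v\in\U(B)$ and $z\in\U(B^{\omega}\cap A')$, does not decouple anything. Since $z$ commutes with $A\supseteq B\ni v$, we have $w^n=v^nz^n$. For $y\in B^{\omega}\cap A'$, the functional $x\mapsto\rho^{\omega}(xy)$ on $A$ is $\U(B)$-invariant. The relative Dixmier property therefore forces $\rho^{\omega}(xy)=\rho(x)\rho^{\omega}(y)$. Hence
\[
\rho^{\omega}(a_1w^{n_1}\cdots a_kw^{n_k})=\rho(a_1v^{n_1}\cdots a_kv^{n_k})\,\rho^{\omega}(z^{n_1+\cdots+n_k}).
\]
The Haar unitary $z$ only kills words with nonzero total exponent. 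For balanced words such as $a_1wa_2w^*a_3wa_4w^*$, you are left with the moments of the single unitary $v\in B$ against $A$, which is the original problem. Drawing $v$ at random from the Dixmier unitaries $u_i$ does not help. The average of $\rho(a_1u_ia_2u_i^*a_3u_ia_4u_i^*)$ is not controlled by $\sum_it_iu_iau_i^*\approx\rho(a)$, because the same $u_i$ occurs several times. In any case, a random choice does not produce one deterministic unitary with small moments. Nor can the ``$\Zst$-room'' hold independent copies of the averaging unitaries, since everything in $B^{\omega}\cap A'$ commutes with $A$. Such elements can only enter as coefficients, as in $w=\sum_ic_iu_i$. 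The alternating moments of $w$ then involve the joint moments of the $c_i$, subject to the relations imposed by unitarity of $w$. Nothing in the proposal shows these can be chosen so that all alternating moments vanish, let alone together with the norm bounds above. Deferring to ``transplanting Ozawa's argument verbatim'' leaves exactly this step unproved.
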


	\begin{proposition}[Selflessness of inclusions]\label{p:incl}
		Let $G$ be a countable amenable group with a two-cocycle $\sigma$ and let $H\trianglelefteq G$ be infinite. Suppose that $\Cr(H,\sigma')$ has a unique tracial state and is $\Zst$-stable, and that $(H\leq G,\sigma)$ satisfies the relative Kleppner condition. Then $\Cr(H,\sigma')\subseteq\Cr(G,\sigma)$ is a selfless inclusion. In particular, every intermediate $C^*$-subalgebra is selfless. Moreover, $\Gamma\mapsto\Cr(\Gamma,\sigma|_\Gamma)$ is a bijection between the intermediate subgroups $H\leq\Gamma\leq G$ and the intermediate $C^*$-subalgebras of $\Cr(H,\sigma')\subseteq\Cr(G,\sigma)$.
	\end{proposition}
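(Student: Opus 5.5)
The plan is to apply Fact~\ref{f:incl} to $B=\Cr(H,\sigma')\subseteq A=\Cr(G,\sigma)$ with $\rho=\tau$ the canonical trace, after checking its three hypotheses one at a time.

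\emph{$\Zst$-stability of the inclusion.} This is exactly Proposition~\ref{t:amenable-inclusion-selfless}, since $G$ is countable amenable, $H$ is infinite and normal, and $B$ has a unique tracial state and is $\Zst$-stable.

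\emph{Relative Dixmier property.} This comes from Fact~\ref{f:irr}. By Fact~\ref{f:murphy}, $B$ is simple, because $H$ is amenable and $B$ has a unique tracial state. Thus $(H,\sigma')$ is $C^*$-simple. Together with the relative Kleppner condition this gives $C^*$-irreducibility of the inclusion. Since $B$ has a unique tracial state, Fact~\ref{f:irr} then upgrades this to the relative Dixmier property.

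\emph{Exactness and trace invariance.} $A$ is nuclear, hence exact, because $G$ is amenable. For unitaries $v\in B\subseteq A$ we have $\tau(vav^*)=\tau(a)$, simply because $\tau$ is a trace on $A$.

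With these hypotheses in place, Fact~\ref{f:incl} shows that $B\subseteq(A,\tau)$ is a selfless inclusion. Fact~\ref{f:selfincl-conseq} then gives that every intermediate $C^*$-subalgebra is selfless with respect to the restricted trace. The bijection $\Gamma\mapsto\Cr(\Gamma,\sigma|_\Gamma)$ is the last clause of Fact~\ref{f:irr}, whose equivalent conditions we have just verified.

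The only point needing care is the convention mismatch in the relative Kleppner condition. Fact~\ref{f:irr} is stated with the convention of \cite{BedosOmlandIrr}, which already includes Kleppner's condition for $(H,\sigma')$. In our situation this is consistent: $B$ has a unique tracial state, so it is simple, and simplicity forces Kleppner's condition for $(H,\sigma')$. Hence the two conventions agree here, and no real obstacle remains; the substantive work was already done in Proposition~\ref{t:amenable-inclusion-selfless}.
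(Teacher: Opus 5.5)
Your proposal is correct and follows the paper's proof essentially step for step. You verify the hypotheses of Fact~\ref{f:incl} using Proposition~\ref{t:amenable-inclusion-selfless}, exactness of $A$ from amenability, traciality of $\tau$, and Facts~\ref{f:murphy} and~\ref{f:irr} for the relative Dixmier property, and then you conclude with Fact~\ref{f:selfincl-conseq} and the bijection clause of Fact~\ref{f:irr}, exactly as the paper does (your closing remark on conventions is harmless but unnecessary, since the statement already uses the convention of \cite{BedosOmlandIrr}).
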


	\begin{proof}
			Put $A=\Cr(G,\sigma)$ and $B=\Cr(H,\sigma')$. We verify the hypotheses of Fact~\ref{f:incl}. By Proposition~\ref{t:amenable-inclusion-selfless}, $B\subseteq A$ is a $\Zst$-stable inclusion. Since $G$ is amenable, $A$ is nuclear and hence exact. The canonical trace $\tau$ is $\U(B)$-invariant because it is a trace. Moreover, $B$ has a unique tracial state and is therefore simple by Fact~\ref{f:murphy}. Since $(H\leq G,\sigma)$ satisfies the relative Kleppner condition, Fact~\ref{f:irr} shows that the inclusion $B\subseteq A$ has the relative Dixmier property. All hypotheses of Fact~\ref{f:incl} hold, so $B\subseteq A$ is a selfless inclusion, and every intermediate $C^*$-subalgebra is selfless by Fact~\ref{f:selfincl-conseq}. The asserted bijection is also part of Fact~\ref{f:irr}.
	\end{proof}

		The relative Kleppner condition plays the same role for the inclusion that Kleppner's condition plays for the individual algebra; it provides the dynamics necessary for $C^*$-irreducibility. In the von Neumann picture, it yields strong outerness of the quotient action of $K=G/H$ on $\Cr(H,\sigma')$, that is, the automorphism induced on $\pi_\tau(\Cr(H,\sigma'))''$ by each nontrivial element of $K$ is outer \cite{BedosOmlandIrr}. The next results are the inclusion analogues of Proposition~\ref{p:am}, Corollary~\ref{t:fc}, and Theorem~\ref{t:vn}.

	\begin{theorem}[Amenable groups]\label{t:am-incl}
		Let $G$ be a countable amenable group with a two-cocycle $\sigma$ and let $H\trianglelefteq G$ be infinite. Then the following are equivalent:
		\begin{itemize}
			\item[(i)] $\Cr(H,\sigma')\subseteq \Cr(G,\sigma)$ is a selfless inclusion;
			\item[(ii)] $\Cr(H,\sigma')$ has a unique tracial state and is $\Zst$-stable, and $(H\leq G,\sigma)$ satisfies the relative Kleppner condition;
			\item[(iii)] $\Cr(H,\sigma')$ has a unique tracial state and has finite nuclear dimension, and $(H\leq G,\sigma)$ satisfies the relative Kleppner condition.
		\end{itemize}
			When these hold, every intermediate $C^*$-subalgebra is of the form $\Cr(\Gamma,\sigma|_\Gamma)$ for a unique intermediate subgroup $H\leq\Gamma\leq G$ and is selfless.
	\end{theorem}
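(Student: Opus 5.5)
The plan is to prove (ii)$\Rightarrow$(i), (i)$\Rightarrow$(ii), and then (ii)$\Leftrightarrow$(iii).

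\emph{(ii)$\Rightarrow$(i).} This, together with the final assertion, is exactly Proposition~\ref{p:incl}. That result gives selflessness of the inclusion, selflessness of every intermediate $C^*$-subalgebra, and the bijection $\Gamma\mapsto\Cr(\Gamma,\sigma|_\Gamma)$ with uniqueness of $\Gamma$. Since (i) will be shown equivalent to (ii), the final assertion holds whenever the equivalent conditions hold.

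\emph{(i)$\Rightarrow$(ii).} Assume the inclusion is selfless. By Fact~\ref{f:selfincl-conseq}, $\Cr(H,\sigma')$ is selfless, and the inclusion is $C^*$-irreducible. Because $H$ is countably infinite and amenable, $\Cr(H,\sigma')$ is separable, unital, nuclear and infinite-dimensional. Proposition~\ref{p:am} therefore gives that it has a unique tracial state and is $\Zst$-stable. From $C^*$-irreducibility, Fact~\ref{f:irr} yields the relative Kleppner condition for $(H\leq G,\sigma)$. One convention point must be checked: Fact~\ref{f:irr} uses the convention of \cite[Definition~3.6]{BedosOmlandIrr}, which is the one adopted in Definition~\ref{d:relK}, so nothing further is needed. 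In any case, Kleppner's condition for $(H,\sigma')$ follows from simplicity of $\Cr(H,\sigma')$.

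\emph{(ii)$\Leftrightarrow$(iii).} Under either condition, $\Cr(H,\sigma')$ has a unique tracial state, so it is simple by Fact~\ref{f:murphy}. It is also separable, unital and nuclear. It is infinite-dimensional because $H$ is infinite and the unitaries $\lambda_\sigma(h)$ are linearly independent, being orthonormal for the faithful trace. Fact~\ref{f:tw} then makes $\Zst$-stability and finite nuclear dimension equivalent, and the relative Kleppner condition is common to (ii) and (iii).

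The main, if mild, obstacle is (i)$\Rightarrow$(ii): extracting the relative Kleppner condition requires the correct direction of Fact~\ref{f:irr} and the matching convention, and the rest is bookkeeping with results already established.
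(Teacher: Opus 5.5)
Your proposal is correct and follows the paper's proof essentially step for step: (ii)$\Rightarrow$(i) and the final assertion come from Proposition~\ref{p:incl}, (i)$\Rightarrow$(ii) comes from Fact~\ref{f:selfincl-conseq} combined with Fact~\ref{f:irr}, and (ii)$\Leftrightarrow$(iii) comes from Facts~\ref{f:murphy} and~\ref{f:tw}. The one cosmetic difference is that you apply Proposition~\ref{p:am} to $(H,\sigma')$ instead of citing Facts~\ref{f:rob} and~\ref{f:ms} directly, which is the same argument; you also spell out why $\Cr(H,\sigma')$ is infinite-dimensional, which the paper leaves implicit.
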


	\begin{proof}
			Put $A=\Cr(G,\sigma)$ and $B=\Cr(H,\sigma')$. Since $H$ is countable and amenable, $B$ is separable, unital, and nuclear.

		(ii)$\Longleftrightarrow$(iii): Since $B$ has a unique tracial state, Fact~\ref{f:murphy} gives simplicity, and Fact~\ref{f:tw} applies, so $\Zst$-stability and finite nuclear dimension are equivalent. The relative Kleppner condition is common to both.

		(ii)$\Longrightarrow$(i), together with the final statements, is Proposition~\ref{p:incl}.

			(i)$\Longrightarrow$(ii): If $B\subseteq A$ is a selfless inclusion, then $B$ is selfless and the inclusion is $C^*$-irreducible (Fact~\ref{f:selfincl-conseq}). A selfless $C^*$-algebra is simple, has a unique tracial state, and has strict comparison (Fact~\ref{f:rob}). As $B$ is separable, unital, nuclear, and infinite-dimensional, it is $\Zst$-stable by Fact~\ref{f:ms}. In particular, $B$ is simple, so by Fact~\ref{f:irr} the $C^*$-irreducibility of $B\subseteq A$ is equivalent to the relative Kleppner condition for $(H\leq G,\sigma)$. This gives (ii).
	\end{proof}

	\begin{remark}[On normality]\label{r:normal}
			Normality of $H$ enters in the crossed product decomposition behind Proposition~\ref{t:amenable-inclusion-selfless} and in the hypotheses of Fact~\ref{f:irr}. The implication (i)$\Longrightarrow$(ii) of Theorem~\ref{t:am-incl} uses neither, so it holds for every infinite subgroup $H\leq G$. Indeed, a selfless inclusion is $C^*$-irreducible by Fact~\ref{f:selfincl-conseq}. It is also irreducible: if $x\in\Cr(H,\sigma')'\cap\Cr(G,\sigma)$ were a nonscalar self-adjoint element, then $C^*(\Cr(H,\sigma'),x)$ would be an intermediate $C^*$-algebra with nontrivial center and hence would not be simple. In fact, irreducibility of the inclusion is equivalent to the relative Kleppner condition for an arbitrary subgroup by \cite[Proposition~4.3]{BedosOmlandIrr}. Without normality the relative Kleppner condition alone need not give $C^*$-irreducibility \cite[Remark~6.8]{BedosOmlandIrr}, although the example there is not amenable.
	\end{remark}

	\begin{corollary}[FC-hypercentral groups]\label{t:fc-incl}
		Let $G$ be a countable FC-hypercentral group with a two-cocycle $\sigma$ and let $H\trianglelefteq G$ be infinite. Then the following are equivalent:
		\begin{itemize}
			\item[(i)] $\Cr(H,\sigma')\subseteq \Cr(G,\sigma)$ is a selfless inclusion;
			\item[(ii)] $\Cr(H,\sigma')$ is $\Zst$-stable and $(H\leq G,\sigma)$ satisfies the relative Kleppner condition;
			\item[(iii)] $\Cr(H,\sigma')$ has finite nuclear dimension and $(H\leq G,\sigma)$ satisfies the relative Kleppner condition.
		\end{itemize}
		When these hold, every intermediate $C^*$-subalgebra is of the form $\Cr(\Gamma,\sigma|_\Gamma)$ for a unique intermediate subgroup $H\leq\Gamma\leq G$ and is selfless.
	\end{corollary}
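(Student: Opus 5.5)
The plan is to reduce Corollary~\ref{t:fc-incl} to Theorem~\ref{t:am-incl}, in the same way that Corollary~\ref{t:fc} was reduced to Proposition~\ref{p:am}. By Fact~\ref{f:fg}, $G$ is amenable, so Theorem~\ref{t:am-incl} applies. Also by Fact~\ref{f:fg}, the subgroup $H$ is again FC-hypercentral, and it is countable and infinite. The only difference between the two statements is that conditions (ii) and (iii) here omit the hypothesis that $\Cr(H,\sigma')$ has a unique tracial state. So it suffices to show that, under the relative Kleppner condition, $\Cr(H,\sigma')$ automatically has a unique tracial state.

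For this I would use Definition~\ref{d:relK}, together with the convention adopted after it, that the relative Kleppner condition for $(H\leq G,\sigma)$ incorporates Kleppner's condition for $(H,\sigma')$. To spell this out: take $g\in H$ that is $\sigma'$-regular in $H$. Then $g$ is $\sigma$-regular relative to $H$, since the defining condition only involves $h\in H$ commuting with $g$. Its $H$-conjugacy class is exactly its conjugacy class in $H$. Hence, if $g\neq e$, that class is infinite. Since $H$ is FC-hypercentral, Fact~\ref{f:bo} then gives that $\Cr(H,\sigma')$ has a unique tracial state.

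It follows that (ii) and (iii) here are equivalent to (ii) and (iii) of Theorem~\ref{t:am-incl}, respectively. Moreover, (i) is literally the same statement in both results. So the equivalences follow, and the final assertion about intermediate subalgebras is taken directly from Theorem~\ref{t:am-incl}.

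I do not expect a genuine obstacle here. The only point needing care is the convention issue. Under the weaker convention of \cite[Definition~4.5]{BedosOmlandST}, Kleppner's condition for $(H,\sigma')$ would not be built in, and the reduction would fail. I will therefore state explicitly that Definition~\ref{d:relK} is the convention in force.
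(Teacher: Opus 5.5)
Your proposal is correct and follows the paper's proof: both use Fact~\ref{f:fg} to get amenability of $G$ and FC-hypercentrality of $H$, and observe that the relative Kleppner condition (in the convention of Definition~\ref{d:relK}) contains Kleppner's condition for $(H,\sigma')$. Fact~\ref{f:bo} then supplies the unique tracial state, so conditions (ii) and (iii) coincide with those of Theorem~\ref{t:am-incl}. Your explicit check that a $\sigma'$-regular $g\in H$ is $\sigma$-regular relative to $H$, with $H$-conjugacy class equal to its conjugacy class in $H$, just spells out what the paper cites from the definition.
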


	\begin{proof}
			The subgroup $H$ is FC-hypercentral (Fact~\ref{f:fg}), so by Fact~\ref{f:bo} the algebra $\Cr(H,\sigma')$ has a unique tracial state if and only if $(H,\sigma')$ satisfies Kleppner's condition. Under either condition~(ii) or condition~(iii), the relative Kleppner condition includes Kleppner's condition for $(H,\sigma')$ (Definition~\ref{d:relK}), hence $\Cr(H,\sigma')$ automatically has a unique tracial state. Thus conditions~(ii) and~(iii) here are equivalent to conditions~(ii) and~(iii) of Theorem~\ref{t:am-incl}, and the equivalence with~(i), together with the final statement, follows from that theorem.
	\end{proof}

	\begin{theorem}[Finitely generated virtually nilpotent groups]\label{t:vn-incl}
		Let $G$ be a finitely generated virtually nilpotent group with a two-cocycle $\sigma$ and let $H\trianglelefteq G$ be infinite. Then the following are equivalent:
		\begin{itemize}
			\item[(i)] $\Cr(H,\sigma')\subseteq \Cr(G,\sigma)$ is a selfless inclusion;
			\item[(ii)] $(H\leq G,\sigma)$ satisfies the relative Kleppner condition.
		\end{itemize}
		When these hold, every intermediate $C^*$-subalgebra is of the form $\Cr(\Gamma,\sigma|_\Gamma)$ for a unique intermediate subgroup $H\leq\Gamma\leq G$ and is selfless.
	\end{theorem}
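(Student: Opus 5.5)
The plan is to reduce the statement to Corollary~\ref{t:fc-incl}, which already characterizes selfless inclusions for countable FC-hypercentral groups. To do so, we show that for finitely generated virtually nilpotent $G$ the extra condition in Corollary~\ref{t:fc-incl}(ii) or (iii) is automatic once the relative Kleppner condition holds.

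First, $G$ is countable because it is finitely generated, and it is FC-hypercentral by Fact~\ref{f:fg}. So Corollary~\ref{t:fc-incl} applies to $H\trianglelefteq G$. The implication (i)$\Rightarrow$(ii) is then immediate from (i)$\Rightarrow$(ii) of Corollary~\ref{t:fc-incl}.

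For (ii)$\Rightarrow$(i), we verify condition (iii) of Corollary~\ref{t:fc-incl}, namely that $\Cr(H,\sigma')$ has finite nuclear dimension. This is the only point needing care, because $H$ is not assumed to be finitely generated a priori. However, $G$ is virtually polycyclic by Fact~\ref{f:fg}. Subgroups of virtually polycyclic groups are again virtually polycyclic: intersect $H$ with a polycyclic finite-index subgroup $P$ of $G$. Then $H\cap P$ has finite index in $H$, and $H\cap P$ is polycyclic because subgroups of polycyclic groups are polycyclic, being finitely generated with a subnormal series having cyclic factors. Hence $H$ is virtually polycyclic, and in particular finitely generated. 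Fact~\ref{f:ew} then gives finite nuclear dimension of $\Cr(H,\sigma')$.

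Together with the relative Kleppner condition, this is condition (iii) of Corollary~\ref{t:fc-incl}, so (i) follows. Alternatively, one could note that the relative Kleppner condition contains Kleppner's condition for $(H,\sigma')$, and that $H$ is infinite and finitely generated virtually nilpotent. Theorem~\ref{t:vn} and its proof then show that $\Cr(H,\sigma')$ is simple and $\Zst$-stable. The final statement about intermediate $C^*$-subalgebras is inherited verbatim from Corollary~\ref{t:fc-incl}.

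The main obstacle is this subgroup-closure step, namely ensuring that Fact~\ref{f:ew} applies to $H$ rather than only to $G$. It is handled by the standard fact that subgroups of polycyclic groups are polycyclic; everything else is bookkeeping with earlier results.
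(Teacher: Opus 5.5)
Your proposal is correct and follows the paper's route: reduce to Corollary~\ref{t:fc-incl} by applying Fact~\ref{f:ew} to $H$. The differences are minor. You check condition~(iii) of that corollary directly, whereas the paper converts finite nuclear dimension into $\Zst$-stability via Facts~\ref{f:bo} and~\ref{f:tw} and checks condition~(ii); you also justify that $H$ is virtually polycyclic, and hence finitely generated, which the paper simply asserts.
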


	\begin{proof}
			The subgroup $H$ is finitely generated and virtually nilpotent, hence FC-hypercentral and virtually polycyclic. Therefore $\Cr(H,\sigma')$ has finite nuclear dimension by Fact~\ref{f:ew}. Under the relative Kleppner condition, $(H,\sigma')$ satisfies Kleppner's condition (Definition~\ref{d:relK}), so $\Cr(H,\sigma')$ is simple with a unique tracial state by Fact~\ref{f:bo}. Being also infinite-dimensional, it is $\Zst$-stable by Fact~\ref{f:tw}. Thus, in the present setting, condition~(ii) of Corollary~\ref{t:fc-incl} is equivalent to condition~(ii) above. The equivalence with~(i), together with the final statement, follows from that corollary.
	\end{proof}

	We close with a family of noncommutative orbifolds and two wreath products over $\Z$. The wreath-product examples show that our inclusion results also apply to amenable groups outside the finitely generated virtually nilpotent class.

	\begin{example}[Noncommutative orbifolds]\label{ex:orb}
			Let $\sigma_\theta(x,y)=e^{\pi i\theta(x_1y_2-x_2y_1)}$ be the standard two-cocycle on $\Z^2$, so that $\Cr(\Z^2,\sigma_\theta)=A_\theta$ is the rotation algebra. Let $R\in\mathrm{SL}(2,\Z)$ have finite order $k\geq2$, necessarily $k\in\{2,3,4,6\}$, and form $G=\Z^2\rtimes\Z_k$ with $H=\Z^2$. Since $R$ preserves the form $x_1y_2-x_2y_1$, the formula
		\[
		\sigma\big((x,s),(y,t)\big)=\sigma_\theta(x,R^sy)
		\]
			defines a two-cocycle on $G$ that restricts to $\sigma_\theta$ on $H$. Moreover, $\Cr(G,\sigma)\cong A_\theta\rtimes_r\Z_k$, where the canonical action is given by $\alpha_s(\lambda_{\sigma_\theta}(x))=\lambda_{\sigma_\theta}(R^sx)$. This crossed product was studied in \cite{ELPW}.

			We claim that $(H\leq G,\sigma)$ satisfies the relative Kleppner condition if and only if $\theta$ is irrational. Suppose $\theta$ is irrational. For $x\in\Z^2$, the $H$-conjugacy class of $(x,0)$ is a singleton. If $(x,0)$ is $\sigma$-regular relative to $H$, then $\theta(x_1y_2-x_2y_1)\in\Z$ for every $y\in\Z^2$, which, by irrationality of $\theta$, forces $x=0$. For $(x,s)$ with $s\neq0$, the matrix $R^s$ has finite order greater than one. If $1$ were an eigenvalue of $R^s$, then $\det R^s=1$ would imply that both eigenvalues are $1$. Since a finite-order complex matrix is diagonalizable, this would force $R^s=I$, a contradiction. Thus $I-R^s$ is injective on $\Z^2$, and hence the $H$-conjugacy class $\{(x+(I-R^s)y,\,s):y\in\Z^2\}$ is infinite. Conversely, if $\theta$ is rational, then already $(H,\sigma_\theta)$ fails Kleppner's condition.

			By Theorem~\ref{t:vn-incl}, the inclusion $A_\theta\subseteq A_\theta\rtimes_r\Z_k$ is selfless if and only if $\theta$ is irrational. When $\theta$ is irrational, the intermediate $C^*$-algebras in this inclusion are precisely the crossed products $A_\theta\rtimes_r\Z_j$, for $j$ dividing $k$, with respect to the restricted actions; all of them are selfless. For irrational $\theta$, the algebra $A_\theta\rtimes_r\Z_k$ is AF by \cite{ELPW}, while $A_\theta$ is not. Thus the algebras on the two sides of a selfless inclusion can have very different $K$-theory.
	\end{example}

	The last two examples are wreath products. Let $N$ be a nontrivial countable abelian group, let $H=\bigoplus_{j\in\Z}N$ with the shift $s(v)_j=v_{j-1}$, and let $G=N\wr\Z=H\rtimes\Z$, so that $(v,i)(w,j)=(v+s^iw,\,i+j)$. If $\sigma'$ is a shift-invariant two-cocycle on $H$, then
	\[
	\sigma\big((v,i),(w,j)\big)=\sigma'(v,s^iw)
	\]
		defines a two-cocycle on $G$ whose restriction to $H$ is $\sigma'$; see \cite[Section~5.2]{BedosOmlandST}. Such a $\sigma$ satisfies the relative Kleppner condition for $(H\leq G,\sigma)$ whenever $(H,\sigma')$ satisfies Kleppner's condition. Indeed, fix $(v,i)$ with $i\neq0$. Its $H$-conjugates are $(v+w-s^iw,\,i)$ for $w\in H$. The map $w\mapsto w-s^iw$ is injective, since a nonzero element fixed by $s^i$ would have infinite support, hence the conjugacy class is infinite. If $v\neq0$, the class of $(v,0)$ is a singleton. Moreover, $\sigma\big((v,0),(w,0)\big)=\sigma'(v,w)$, so $(v,0)$ is $\sigma$-regular relative to $H$ exactly when $v$ is $\sigma'$-regular in $H$.

	\begin{example}[The lamplighter group]\label{ex:lamp}
			Take $N=\Z_2$. Then $G=\Z_2\wr\Z$ is the lamplighter group and $H=\bigoplus_\Z\Z_2$. For $j\in\Z$, let $e_j\in H$ be the element supported at $j$ with value $1$. Define $\sigma'(v,w)=(-1)^{\sum_{j<k}v_jw_k}$, so that $\beta(e_j,e_k)=-1$ for all $j\neq k$. The cocycle $\sigma'$ is shift-invariant, and the radical of its commutator bicharacter $\beta$ is trivial. Indeed, if $\beta(v,e_k)=1$ for all $k$, then $\sum_jv_j\equiv v_k$ modulo $2$ for every $k$, which, for finitely supported $v$, forces $v=0$. Thus $(H,\sigma')$ satisfies Kleppner's condition and $(H\leq G,\sigma)$ satisfies the relative Kleppner condition. For $n\geq1$, let $V_n=\langle e_{-n},\dots,e_{n-1}\rangle$. This subgroup has even rank, and the same computation shows that $\beta|_{V_n}$ is nondegenerate. Hence $\Cr(V_n,\sigma'|_{V_n})\cong M_{2^n}$, and $\Cr(H,\sigma')$ is the UHF algebra of type $2^{\infty}$, in accordance with \cite[Section~5.2.2]{BedosOmlandST}.

			The UHF algebra has a unique tracial state and is $\Zst$-stable, so Theorem~\ref{t:am-incl} applies. The inclusion $\Cr(H,\sigma')\subseteq\Cr(G,\sigma)$ is selfless, and hence $\Cr(G,\sigma)$ is selfless by Fact~\ref{f:selfincl-conseq}. In particular, $\Cr(G,\sigma)$ is simple, has a unique tracial state, and is $\Zst$-stable, so it has finite nuclear dimension by Fact~\ref{f:tw}. Here $G$ is amenable and finitely generated but not virtually nilpotent, so $\Cr(G,\sigma)$ is covered neither by Theorem~\ref{t:vn} nor by Fact~\ref{f:ew}.
	\end{example}

	\begin{example}[The wreath product $\Z\wr\Z$]\label{ex:zwrz}
			Take $N=\Z$, so that $G=\Z\wr\Z$ and $H=\bigoplus_{j\in\Z}\Z$, and let $(e_j)_{j\in\Z}$ denote its standard basis. Fix an irrational number $r$ and let
		\[
		\sigma'(v,w)=\prod_{j<k}e^{2\pi i\theta_{k-j}v_jw_k},
		\qquad \theta_1=r,\qquad \theta_m=0\ \ (m\geq2),
		\]
			so that $\beta(e_j,e_{j+1})=e^{2\pi ir}$ and $\beta(e_j,e_k)=1$ when $|k-j|\geq2$. Then $\sigma'$ is shift-invariant, since the exponent for to a pair $j<k$ depends only on $k-j$. By \cite[Section~5.2.1]{BedosOmlandST}, every two-cocycle on $\Z\wr\Z$ is cohomologous to the extension of a shift-invariant cocycle on $H$.

			The pair $(H,\sigma')$ satisfies Kleppner's condition. Indeed, let $v\neq0$ and let $k$ be the largest index with $v_k\neq0$. Only the term $j=k$ contributes to $\beta(v,e_{k+1})=e^{2\pi irv_k}$, which is not $1$ since $r$ is irrational (see also \cite[Example~5.9]{BedosOmlandST}). Hence $(H\leq G,\sigma)$ satisfies the relative Kleppner condition, and $\Cr(H,\sigma')$ has a unique tracial state by Fact~\ref{f:bo}. For each $M\geq1$, $V_M=\langle e_{-M},\dots,e_M\rangle$ is free abelian of finite rank, and $\Cr(V_M,\sigma'|_{V_M})$ is a noncommutative torus with an irrational entry. Thus $\Cr(H,\sigma')$ is an inductive limit of nonrational noncommutative tori and is $\Zst$-stable as in Example~\ref{ex:infdim}. The inclusion $\Cr(H,\sigma')\subseteq\Cr(G,\sigma)$ is then selfless by Theorem~\ref{t:am-incl}, and in particular $\Cr(G,\sigma)$ is selfless. The intermediate $C^*$-algebras are $\Cr(H\rtimes n\Z,\sigma|_{H\rtimes n\Z})$ for $n\geq1$, together with the base algebra $\Cr(H,\sigma')$; all of them are selfless. The uniqueness of the tracial state on $\Cr(G,\sigma)$ was proved in \cite[Proposition~5.8]{BedosOmlandST}, so the additional conclusion here is $\Zst$-stability, and hence selflessness. Here $G$ is a finitely generated, torsion-free, amenable group of exponential growth. The base $\Cr(H,\sigma')$ is selfless by Corollary~\ref{t:fc}, so it has finite nuclear dimension by Fact~\ref{f:tw}. It is not AF by Lemma~\ref{l:k1}, so its nuclear dimension is exactly~$1$ by Fact~\ref{f:nucdim}, and it is not the UHF algebra of Example~\ref{ex:lamp}.
	\end{example}

		\subsection*{Acknowledgments}
		The author used Claude, an AI assistant developed by Anthropic, while preparing this note to help check references and computations and improve the exposition. The author independently verified all mathematical claims and takes full responsibility for the content.

\end{document}